\documentclass[paper,11pt]{AAS} 

\usepackage{bm}
\usepackage{amsmath}
\usepackage{subfigure}
\usepackage[colorlinks=false, pdfstartview=FitV, linkcolor=red, citecolor=red,urlcolor=red]{hyperref}
\usepackage{overcite}
\usepackage{amssymb}
\usepackage{enumitem}
\usepackage{algorithm}
\usepackage{algpseudocode}
\usepackage{graphicx}
\usepackage{amsthm} 
\theoremstyle{definition}

\newtheorem{proposition}{Proposition}
\newtheorem{theorem}{Theorem}
\newtheorem{remark}{Remark}

\PaperNumber{25-056}

\begin{document}

\title{Adaptive Grid Refinement for Optimal Feedback Control}

\author{Alen E. Golpashin\thanks{PhD Candidate, Department of Aerospace Engineering, University of Illinois, Urbana, IL, 61801, USA.}
\ , 
Matthew T. Clark\thanks{PhD Student, Department of Aerospace Engineering, University of Illinois, Urbana, IL, 61801, USA.}
\ and
Bruce A. Conway\thanks{Professor Emeritus \& Research Professor, AAS Fellow, Department of Aerospace Engineering, University of Illinois, Urbana, IL, 61801, USA.}
}

\maketitle{}

\begin{abstract}
This paper introduces an algorithmic approach for adaptive grid refinement for solving the Hamilton-Jacobi-Bellman (HJB) partial differential equation (PDE) to obtain the optimal feedback control. Traditional grid generation methods often face challenges associated with state-space dimensionality, solution regularity, and large variations in the solution. Whereas fixed grid generation methods may fail to capture the intricate details of the optimal solution, our methodology addresses this challenge by dynamically refining the grid used to solve the HJB equation, thereby optimizing computational resource usage while targeting improved solution accuracy. The reinforcement learning (RL)-based approach presented in this study seeks to reduce the node population while constructing a grid with higher node density in regions with large discretization error.
\end{abstract}

\section{Introduction}
Grid-based computations offer several benefits, including enabling a structured representation of the state space, resolution of sharp gradients and nonlinearities in the solution, and compatibility with feedback control mechanisms. When combined with dynamic programming, these methods can achieve global optimality. Moreover, grid-based approaches enable parallelization for computational efficiency. These advantages make grid-based methods a powerful tool in applications of optimal control. When solving the HJB PDE globally, the grid structure significantly influences the error resulting from the spatial and temporal discretization of the continuous state space. The HJB equation is inherently complex and becomes even more difficult to solve when formulated for dynamics with high-dimensional state spaces. By discretizing this space into a finite grid, we can transform the continuous problem into a manageable set of algebraic equations that can be solved iteratively. Regular grid structures, in particular, allow efficient implementation of finite difference methods, commonly used to approximate derivatives in the HJB equation. This discretization process converts the continuous control problem into a form that can be handled by computational algorithms, making it possible to approximate the value function and derive optimal control policies. It is generally understood that a finer grid results in a smaller discretization error. However, this comes at the cost of increased computational resources. In particular, when solving higher-dimensional problems, the space complexity of regular grids grows exponentially with the number of dimensions of the system \cite{chilan2020}, leading to rapid exhaustion of available computational capacity.
To address a similar difficulty in high-dimensional integration, sparse grid methods were proposed as a promising approach to mitigate the curse of dimensionality. The foundational idea of sparse grids originates from Smolyak's 1963 work \cite{smolyak1963}, where he introduced a hierarchical approach for constructing sparse tensor products to approximate high-dimensional integrals. Building on this, Zenger later formalized and extended the use of sparse grids for numerical solutions of PDEs in 1991 \cite{zenger1991}. By avoiding the full tensor product structure, sparse grids reduce the number of nodes needed to approximate the infinite-dimensional solution of PDEs. These methods are particularly effective for moderately high-dimensional problems, where the computational cost of regular grids would be prohibitive. For instance, Kang and Wilcox demonstrated the effectiveness of a sparse grid characteristics method in solving HJB equations, showing that sparse grids can handle systems with up to six dimensions more efficiently than traditional full grids \cite{kang2017}. However, despite their efficiency, sparse grids have limitations. Their performance deteriorates as the dimensionality increases beyond a certain threshold, since the number of points still grows uncomfortably with the dimension. Besides their implementation complexity, sparse grids may struggle to capture localized features or steep gradients in the PDE solutions unless combined with adaptive refinement techniques.

Unstructured simplex meshes accommodate irregular domains and support piecewise-linear interpolation through barycentric coordinates. Their construction requires attention to element quality; for example, three-dimensional Delaunay triangulations can contain sliver tetrahedra that impair derivative interpolation \cite{persson2004}. Generating and updating the triangulation also adds computational cost. Aside from simplex meshes, quasi-Monte Carlo (QMC) sampling can be used to generate low-discrepancy unstructured grids \cite{chilan2020}. QMC grids are generated using Halton or Sobol sequences. These deterministic sequences are low-discrepancy which result in a more uniform coverage of the domain. For instance, consider an affine image $\Omega=\phi([0,1]^d)$ of the unit cube, with $x_i=\phi(q_i)$. Under the standard regularity and low-discrepancy assumptions for QMC integration, the quadrature error admits a bound of the form
\begin{equation*}
\left|\int_{\Omega} V(x)\,dx - \frac{|\Omega|}{n} \sum_{i=1}^n V(x_i)\right| \leq C \frac{(\log n)^d}{n},
\end{equation*}           
where $n$ denotes the number of sample points. For independent uniform Monte Carlo samples and finite-variance integrands, the root-mean-square quadrature error scales as $\mathcal{O}(n^{-1/2})$. For fixed $d$, the stated QMC error bound decays faster as $n\rightarrow\infty$. Due to their implementation simplicity and low time complexity, QMC sequences are ideal candidates for applications with a repetition component, i.e. training. However, these grids may need a sufficiently large number of sample point to compensate for the impact of $(\log n)^d$, especially in very high dimensions \cite{caflisch1998}.

The choice of interpolation method also plays a critical role in solving the HJB PDE. While QMC grids offer uniform coverage and low-discrepancy properties, their performance is highly dependent on how function values are approximated between nodes. For instance, while linear interpolation is computationally efficient and may suffice for smooth functions, it can introduce significant errors in regions with large variations. Higher-order interpolation methods, such as cubic splines, can provide more accurate approximations for sufficiently smooth functions but require careful consideration of node placement to avoid numerical artifacts. Therefore, refining the grid resolution and strategically placing nodes in regions where the value function changes most rapidly is advantageous for improving interpolation accuracy and reducing solution errors. In particular, the numerical solution of the HJB PDE has benefited from methods that leverage QMC sequences like Halton grids to reduce the space complexity of value iteration approximations in higher-dimensional problems. Chilan and Conway\cite{chilan2020} combine low-discrepancy QMC grids with Kriging interpolation to approximate HJB viscosity solutions on unstructured grids, avoiding the full tensor-product structure of regular grids. However, even QMC-based grids, which use quasi-deterministic sequences for node placement, may allocate similar node densities to regions that exhibit vastly different levels of complexity. As a result, regions with large discretization error or interpolation residuals may still suffer from inadequate node density, while simpler regions may be over-populated with nodes.

To address this limitation, this work implements an algorithm that iteratively adjusts grid nodes based on error estimation, concentrating computational efforts in regions where the solution exhibits significant complexity or rapid changes. By incorporating recent advancements in deep neural networks and reinforcement learning (RL), the algorithm generates feasible local actions that strategically places nodes based on learned criteria. Our primary objective is to capture critical features of the solution landscape with higher resolution, improving the accuracy of the optimal control policy representation. Here, we consider the grid node population as a function of the upper bounds of both time and space complexities and, by analogy, the computational burden. For node generation, we rely on QMC sampling to more uniformly  cover the domains of interest. 

Reinforcement learning has been utilized in several prior works for adaptive mesh and grid refinement; cf. References~\citenum{yang2023, freymuth2023, yang2023multi}. In particular, our adaptive approach builds upon insights from RL approach developed in Foucart et al.\cite{foucart2023} where local actions are confined to a bounded volume or cell of the grid. The reward function evaluates changes globally over the computational domain while guiding local node placement. The numerical results show increased node concentration in regions of rapid gradient variation, consistent with the interpolation-error bound. The agent is trained on an example two dimensional time-optimal control problem. The zero-shot learning ability of the algorithm is demonstrated using higher dimensional infinite-time horizon problems.  To demonstrate the efficacy of our approach, we then evaluate the performance of the trained algorithm in improving the solution process of the optimal control solvers. 

The paper is organized as follows: In the \nameref{sec:preliminaries} section, we review classical results on HJB approximations and convergence bounds to motivate our approach. In the \nameref{sec:formulation} section, we introduce the RL-based algorithm. \nameref{sec:baseline} outlines the baseline training procedures and results, while \nameref{sec:ablation} investigates the effects of various hyperparameters. Finally, in the \nameref{sec:experiments} section, we demonstrate the applicability of the method to aerospace benchmark problems. Code and data are available here\footnote{https://github.com/golpashin/AGR}.

\section{Preliminaries}\label{sec:preliminaries}
Consider the following optimal control problem
\begin{equation}
\begin{aligned}
& \underset{u(\cdot)}{\text{minimize}}
& & J(u) = \int_{t_0}^{\infty} r(x(t), u(t))\,\, e^{-\lambda (t - t_0)} \, dt  \\
& \text{subject to}
& & \dot{x}(t) = f(x(t), u(t)), \\
&&& x(t_0) = x_0, \\
&&& t \in [t_0, \infty)
\label{optimal_problem}
\end{aligned}
\end{equation}
where $U\subset\mathbb{R}^{m}$ is a nonempty compact set of admissible control values, $f:\mathbb{R}^{d}\times U\rightarrow\mathbb{R}^{d}$ describes the dynamics, and $r:\mathbb{R}^{d}\times U\rightarrow\mathbb{R}$ is the running cost. Both $f$ and $r$ are jointly continuous in $(x,u)$. The initial state is $x_0$, and $\lambda>0$ is the discount factor. The value function is defined by
\begin{equation}
V(x_0) \equiv \inf\limits_{u \in \mathcal{U}} J(x_0, u) 
\label{value_function}
\end{equation}
where $\mathcal{U}=\{u:[t_0,\infty)\rightarrow U:u\text{ is measurable}\}$ is the set of admissible control functions. We assume that $f$ and $r$ are uniformly bounded on $\mathbb{R}^{d}\times U$ and satisfy the following Lipschitz estimates, uniformly in $u\in U$
\begin{equation}
|f(x_1, u) - f(x_2, u)| \leq L_f |x_1 - x_2|,
\label{f_lipschitz}
\end{equation}
\begin{equation}
|r(x_1, u) - r(x_2, u)| \leq L_r |x_1 - x_2|,
\label{L_lipschitz}
\end{equation}
for any $u\in U$ and $x_1,x_2\in \mathbb{R}^{d}$.

\begin{proposition} (Lions~\cite{lions1985})
Assume relations \eqref{f_lipschitz} and \eqref{L_lipschitz} hold. Then the uniformly bounded function $V\in C(\mathbb{R}^d)$ is the viscosity solution of
\begin{equation}
\lambda V+\sup\limits_{u \in U}\Big\{ -f(x,u)\nabla V-r(x,u) \Big\}=0.
\label{hjb_viscosity}
\end{equation}  
\end{proposition}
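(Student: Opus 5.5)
The plan follows the classical route of Lions and Crandall--Evans--Lions in three steps. First, $V$ is bounded and continuous. Second, $V$ satisfies the dynamic programming principle (DPP). Third, the DPP is turned into the sub- and supersolution inequalities by testing against smooth functions.

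\textbf{Boundedness and continuity.} Let $M_r=\sup|r|$. Then $|J(x_0,u)|\le M_r/\lambda$ for every control, so $|V|\le M_r/\lambda$. For continuity, fix a control $u$ and let $x_1(\cdot),x_2(\cdot)$ be the trajectories from $x_1,x_2$. By \eqref{f_lipschitz} and Gronwall, $|x_1(t)-x_2(t)|\le e^{L_f t}|x_1-x_2|$. I will split the cost integral at a time $T$:
\begin{equation*}
|J(x_1,u)-J(x_2,u)|\le \int_{0}^{T} L_r e^{(L_f-\lambda)t}|x_1-x_2|\,dt + \frac{2M_r}{\lambda}e^{-\lambda T}.
\end{equation*}
This bound is uniform in $u$, so it carries over to $|V(x_1)-V(x_2)|$. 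Choosing $T$ large and then $|x_1-x_2|$ small gives uniform continuity. In fact, optimizing $T$ gives Hölder continuity, with exponent $\min\{1,\lambda/L_f\}$ when $\lambda\neq L_f$.

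\textbf{Dynamic programming principle.} The DPP states that for every $h>0$,
\begin{equation*}
V(x_0)=\inf_{u\in\mathcal U}\Big\{\int_{t_0}^{t_0+h} r(x(t),u(t))e^{-\lambda(t-t_0)}dt + e^{-\lambda h}V(x(t_0+h))\Big\}.
\end{equation*}
The inequality ``$\le$'' comes from concatenating an arbitrary control on $[t_0,t_0+h]$ with an $\varepsilon$-optimal control from $x(t_0+h)$. The inequality ``$\ge$'' comes from splitting any control at $t_0+h$ and using time invariance of the dynamics. This step is the main obstacle. The $\varepsilon$-optimal controls depend on the endpoint $x(t_0+h)$, so the concatenated control must be shown to be measurable. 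I will handle this with the continuity from the first step: cover the reachable set by finitely many small balls, choose one $\varepsilon$-optimal control per ball center, and paste these piecewise. This avoids any measurable selection theorem.

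\textbf{Viscosity inequalities.} Let $\varphi\in C^1$ and suppose $V-\varphi$ has a local maximum at $x_0$, normalized so that $V(x_0)=\varphi(x_0)$. For the subsolution property, take a constant control $u$ in the ``$\le$'' half of the DPP and replace $V$ by $\varphi$ near $x_0$. This gives
\begin{equation*}
\varphi(x_0)\le \int_0^h r\,e^{-\lambda t}dt+e^{-\lambda h}\varphi(x(h)).
\end{equation*}
Divide by $h$ and let $h\to0$, using continuity of $f$ and $r$ and boundedness of $f$, which keeps $x(t)$ near $x_0$. This yields $\lambda\varphi(x_0)-f(x_0,u)\cdot\nabla\varphi(x_0)-r(x_0,u)\le0$ for every $u$, and taking the supremum over $u$ gives the subsolution inequality.

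For the supersolution property, suppose $V-\varphi$ has a local minimum at $x_0$. For each $h$, pick an $\varepsilon h$-optimal control $u_h$ in the DPP. Rewrite $\varphi(x(h))-\varphi(x_0)$ as $\int_0^h \nabla\varphi\cdot f\,dt$. Joint continuity lets me replace $x(t)$ by $x_0$ up to an $o(1)$ error that is uniform in $u$, using compactness of $U$. Bounding the integrand by the supremum over $u$ then gives
\begin{equation*}
\lambda\varphi(x_0)+\sup_u\{-f(x_0,u)\cdot\nabla\varphi(x_0)-r(x_0,u)\}\ge -\varepsilon-o(1).
\end{equation*}
Letting $h\to0$ and then $\varepsilon\to0$ gives the supersolution inequality. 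Together the two inequalities say that $V$ is a viscosity solution of \eqref{hjb_viscosity}.
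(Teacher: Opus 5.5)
The paper gives no proof of this proposition; it only cites Lions. Your argument therefore has to stand on its own as a reconstruction of the classical proof. As far as it goes, it is correct:
\begin{itemize}
\item the bound $|V|\le M_r/\lambda$ holds;
\item the split-horizon continuity estimate, uniform in $u$, is right;
\item both halves of the DPP are fine;
\item both test-function computations are right, including the uniform-in-$u$ replacement of $x(t)$ by $x_0$ in the supersolution step.
\end{itemize}
Together these show that $V$ is a bounded, uniformly continuous viscosity solution of \eqref{hjb_viscosity}.

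One correction of emphasis: the measurability issue you call the main obstacle does not arise here. In the ``$\le$'' half of the DPP you fix the first-stage control, so $x(t_0+h)$ is a single point. You need only one $\varepsilon$-optimal control from that point, and its concatenation with the first-stage control is trivially measurable. The ball-covering device is needed in stochastic control, where the endpoint is random. Here it is harmless but unnecessary.

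The genuine gap, given the wording of the statement, is uniqueness. The proposition says $V$ is \emph{the} viscosity solution. In Lions's theorem this means the unique one among bounded (uniformly) continuous functions. The paper relies on exactly this right afterwards, when it says $V_h$ converges to ``the viscosity solution'' of \eqref{hjb_viscosity}. Nothing in your three steps excludes a second bounded continuous viscosity solution.

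What is missing is a comparison principle: if $w_1$ is a bounded upper semicontinuous subsolution and $w_2$ a bounded lower semicontinuous supersolution, then $w_1\le w_2$. Set $H(x,p)=\sup_{u\in U}\{-f(x,u)\cdot p-r(x,u)\}$. Your hypotheses give
$|H(x,p)-H(y,p)|\le (L_f|p|+L_r)|x-y|$ and $|H(x,p)-H(x,q)|\le \|f\|_\infty|p-q|$.

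The standard proof doubles variables by maximizing $w_1(x)-w_2(y)-|x-y|^2/(2\epsilon)-\beta(|x|^2+|y|^2)$. The $\beta$-penalization is needed because the domain is all of $\mathbb{R}^d$, so maxima need not otherwise be attained. At a maximizer $(\bar x,\bar y)$ one has, for fixed $\beta$, $|\bar x-\bar y|^2/\epsilon\to0$ as $\epsilon\to0$. Since the $w_i$ are bounded, also $\beta|\bar x|,\beta|\bar y|\le C\sqrt{\beta}$. Subtracting the two viscosity inequalities gives $\lambda\sup(w_1-w_2)\le o(1)$. Because $\lambda>0$, this contradicts $\sup(w_1-w_2)>0$.

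Applying comparison in both directions to $V$ and any other bounded continuous solution gives uniqueness. Without this step you have shown that $V$ is \emph{a} viscosity solution, not \emph{the} one.
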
 
For a time step $0<h\leq 1/\lambda$, we introduce a consistent semidiscrete approximation of Eq. \eqref{hjb_viscosity}. Using an explicit Euler step for the dynamics and first-order approximations of the running-cost integral and discount factor gives the discrete dynamic programming equation
\begin{equation}\label{hjb_h}
V_h(x) = \min_{u \in U} \left\{ (1 - \lambda h) V_h(x + h f(x, u)) + h r(x, u) \right\}, \quad \text{for } x \in \mathbb{R}^d.
\end{equation}  
Under the stated boundedness and Lipschitz assumptions, for $\lambda>L_f$, the semidiscrete solution $V_h$ converges locally uniformly to the viscosity solution of Eq. \eqref{hjb_viscosity} as $h\rightarrow0$ \cite{dolcetta1983}. Moreover, the following error estimate holds
\begin{equation}
\|V - V_h\|_{\infty} \leq C h^{1/2},
\label{h_bound}
\end{equation}
where $C>0$ is a constant \cite{dolcetta1984}. Next, we  define the bounded set $A[0, 1]^d+D=\Omega\subset\mathbb{R}^d$, $A\in\mathbb{R}^{d\times d}$, $D\in\mathbb{R}^{d}$ as the  domain of computation and generate  the finite grid 
\begin{equation}
\mathcal{X} = \left\{ \phi(\mathbf{q}_i) \mid \mathbf{q}_i \in [0, 1]^d, \, i = 1, 2, \ldots, n \right\},
\label{grid}
\end{equation}
where $\phi(\mathbf{q})=A\mathbf{q}+D$ is an affine transformation and $A$ is nonsingular. For the spatial estimates below, let $\overline{\Omega}=\bigcup_{j\in\mathcal{J}}\tilde{S}_j$ be a triangulation whose vertex set is $\mathcal{X}=\{x_1,\ldots,x_n\}$. The space of continuous piecewise-linear functions is defined as \cite{falcone1987}
\begin{equation}\label{piecewise_linear}
\mathcal{W}
=\left\{w\in C(\overline{\Omega}) :w|_{\tilde{S}_j}\text{ is affine for every}\,j\in\mathcal{J}\right\}.
\end{equation}
We approximate the solution of Eq. \eqref{hjb_h} by a function in $\mathcal{W}$. For the theoretical estimates below, we assume that every evaluated footpoint $x_i+hf(x_i,u)$ belongs to $\overline{\Omega}$ for all $i=1,\ldots,n$ and $u\in U$. A footpoint need not be a grid node; its value is evaluated by interpolation on a containing simplex. The corresponding barycentric weights $\lambda_{ij}(u)$ are nonnegative, sum to one, and satisfy  
\begin{equation}
w(x_i+hf(x_i,u))= \sum_{j=1}^{n} \lambda_{ij}(u) w(x_j) \quad \text{and} \quad x_i+hf(x_i,u)= \sum_{j=1}^{n} \lambda_{ij}(u) x_j
\label{w_interp}
\end{equation}~
where $\sum_{j=1}^{n}\lambda_{ij}(u)=1$ for every $u\in U$. The fully discrete approximation $V_h^k\in\mathcal{W}$ is defined by
\begin{equation}\label{hjb_h_x}
V_h^k(x_i)
=\min_{u\in U}\left\{(1-\lambda h)\sum_{j=1}^{n}\lambda_{ij}(u)V_h^k(x_j)+h\,r(x_i,u)\right\},
\end{equation}
for $i=1,\ldots,n$. Existence and uniqueness follow from the contraction mapping argument \cite{falcone1987}. Define $F:\mathbb{R}^{n}\rightarrow\mathbb{R}^{n}$ by
\begin{equation}\label{fixed_point}
(F(W))_i=
\min_{u\in U}\left\{(1-\lambda h)\sum_{j=1}^{n}\lambda_{ij}(u)W_j+h\,r(x_i,u)\right\},
\qquad i=1,\ldots,n,
\end{equation}
where $W=(W_1,\ldots,W_n)^\top$ is the vector of nodal values. It can then be proved that Eq. \eqref{fixed_point} is a contraction mapping by showing
\begin{equation}\label{contraction}
\|F(W_1 )-F( W_2)\|_{\infty} \leq (1-\lambda h)\|W_1 - W_2\|_{\infty}, 
\end{equation}
which guarantees the existence of a unique nodal
fixed point and hence a unique solution
$V_h^k\in\mathcal{W}$ of Eq. \eqref{hjb_h_x}.
Let
\begin{equation}
k=\max_{j\in\mathcal{J}}
\operatorname{diam}(\tilde{S}_j)
\end{equation}
denote the largest simplex diameter. For $\lambda>L_f$, the spatial approximation error is bounded in terms of $k/h$ \cite{falcone1987}. Combining this spatial estimate with Eq. \eqref{h_bound} gives
\begin{equation}\label{bound_h_k}
\| V - V_h^k \|_\infty \leq C h^{1/2} + \frac{L_r}{\lambda (\lambda - L_f)} \frac{k}{h}
\end{equation}
for $\lambda>L_f$.

The following theorem establishes a bound on the difference between the semi-discrete HJB solution $V_h$ and its spatially discrete, piecewise-linear counterpart $V^k_h$. The aim is to show that this difference depends on the local interpolation errors. As a result, we shall propose adapting and refining the mesh where the local interpolation fails most significantly.    

\begin{theorem} 
Suppose $\lambda>L_{f}$. There is a constant $C>0$ (independent of $h$ and $k$) such that
\begin{equation}\label{interp_bound}
\|\,V_{h}-V_{h}^{k}\|_{\infty}\;\le\;
\frac{C}{\lambda\,h}
\bigl\|V_{h}-\mathcal{I}_{k}[V_{h}]\bigr\|_{\infty},
\end{equation}
where $\mathcal{I}_{k}[V_{h}]$ is the piecewise--linear interpolant of $V_{h}$ on $\mathcal{X}$.
\end{theorem}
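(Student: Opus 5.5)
We prove the bound with a standard contraction-plus-consistency argument in nodal values, using the map $F$ of Eq.~\eqref{fixed_point} and its contraction property Eq.~\eqref{contraction}. Let $W^\ast\in\mathbb{R}^n$ be the nodal fixed point of $F$, so that $W^\ast_i=V_h^k(x_i)$. Let $Z\in\mathbb{R}^n$ be the nodal restriction of $V_h$, i.e.\ $Z_i=V_h(x_i)$. Since $\mathcal{I}_k[V_h]$ and $V_h^k$ both lie in $\mathcal{W}$, their difference is piecewise linear. Its sup over $\overline{\Omega}$ is therefore attained at the vertices, which gives
\begin{equation*}
\|V_h-V_h^k\|_\infty\le \|V_h-\mathcal{I}_k[V_h]\|_\infty+\|\mathcal{I}_k[V_h]-V_h^k\|_\infty
= \|V_h-\mathcal{I}_k[V_h]\|_\infty+\|Z-W^\ast\|_\infty .
\end{equation*}
The task thus reduces to bounding the nodal error $\|Z-W^\ast\|_\infty$ by $\frac{C}{\lambda h}\|V_h-\mathcal{I}_k[V_h]\|_\infty$.

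\textbf{Step 1 (consistency residual).} Evaluate Eq.~\eqref{hjb_h} at a node $x_i$. It gives $Z_i=\min_{u}\{(1-\lambda h)V_h(x_i+hf(x_i,u))+h\,r(x_i,u)\}$. By Eq.~\eqref{w_interp} applied to $\mathcal{I}_k[V_h]\in\mathcal{W}$, we have $\sum_j\lambda_{ij}(u)Z_j=\mathcal{I}_k[V_h](x_i+hf(x_i,u))$, because the footpoints lie in $\overline{\Omega}$. Comparing the two minimands pointwise in $u$, and using that $|\min a-\min b|\le\sup|a-b|$, yields
\begin{equation*}
\|Z-F(Z)\|_\infty\le (1-\lambda h)\sup_{y\in\overline{\Omega}}|V_h(y)-\mathcal{I}_k[V_h](y)|\le \|V_h-\mathcal{I}_k[V_h]\|_\infty .
\end{equation*}

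\textbf{Step 2 (contraction).} Write $Z-W^\ast=(Z-F(Z))+(F(Z)-F(W^\ast))$ and apply Eq.~\eqref{contraction}. This gives $\|Z-W^\ast\|_\infty\le \|Z-F(Z)\|_\infty+(1-\lambda h)\|Z-W^\ast\|_\infty$, hence
\begin{equation*}
\|Z-W^\ast\|_\infty\le \frac{1}{\lambda h}\|V_h-\mathcal{I}_k[V_h]\|_\infty .
\end{equation*}
Adding back the interpolation term and using $1\le 1/(\lambda h)$, which holds because $h\le 1/\lambda$, we obtain Eq.~\eqref{interp_bound} with $C=2$. This constant is independent of $h$ and $k$.

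\textbf{Main obstacle.} The delicate point is Step 1, namely the claim that the discrete operator applied to $Z$ reproduces exactly the interpolant $\mathcal{I}_k[V_h]$ evaluated at the footpoint. This requires the footpoint assumption together with the barycentric identity Eq.~\eqref{w_interp}. It also requires $V_h$ to be defined and bounded on $\overline{\Omega}$, so that the sup norms are finite. One must further check that the minima over the compact set $U$ are attained, or else replace them by infima with the same estimate. Everything else is the standard fixed-point perturbation estimate; the hypothesis $\lambda>L_f$ enters only through the well-posedness of $V_h$ and is not needed quantitatively here.
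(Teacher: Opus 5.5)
Your proof is correct and is essentially the paper's argument: the paper compares $V_h(x_i)$ with $V_h^k(x_i)$ directly using the two optimizers $u_i^*$ and $u_i^\circ$, which is your residual-plus-contraction estimate written out by hand, and then passes from the nodes to $\overline{\Omega}$ through barycentric coordinates, just as you do. The only difference is the constant. If you keep the factor $(1-\lambda h)$ from Step 1, you get $\|Z-W^\ast\|_\infty\le\frac{1-\lambda h}{\lambda h}\|V_h-\mathcal{I}_k[V_h]\|_\infty$ and hence $C=1$, which is the constant the paper obtains.
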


\begin{proof} 
The proof of the theorem can be found in the Appendix.
\end{proof}
 
\begin{remark}\label{bramble_hilbert}
Suppose that $V_h\in C^2(\overline{\Omega})$ for each
admissible $h$ and that $\sup_h \|D^2V_h\|_{L^\infty(\Omega)}<\infty$. On a shape-regular family of triangulations, the Bramble--Hilbert lemma and the stability of piecewise-linear nodal interpolation yield\[
\|V_h-\mathcal{I}_k[V_h]\|_\infty
\le C k^2 \|D^2V_h\|_{L^\infty(\Omega)},
\]
where $C$ is independent of $h$ and $k$. Consequently, \eqref{interp_bound} yields
\[
\|V_h-V_h^k\|_\infty
\le \frac{C k^2}{\lambda h}
\|D^2V_h\|_{L^\infty(\Omega)}.
\]
For fixed $h$, this gives an $O(k^2/h)$ estimate as
$k\to0$.
\end{remark}

Therefore, we conclude that for fixed $h$, careful placement of the grid nodes $\{x_i\}$ can tighten the spatial discretization error bound by reducing the maximum local interpolation error.

For time-optimal control formulations, we consider an undiscounted problem with $r(x,u)\equiv1$ and take $t_0=0$. The optimal time-to-go function is defined as
\begin{equation}
T(x_{0})=\inf_{u\in{\mathcal{U}}} \Big\{ t\ge0: y_{x_0}(t,u)\in{\mathcal{T}} \Big\}=t_f^{*}(x_0) \label{optimal_time}
\end{equation}
where $t_f^{*}$ is the optimal terminal time, and $\mathcal{T}$ denotes the target set. Falcone\cite{falcone1994} showed that under some controllability conditions, \eqref{optimal_time} satisfies the boundary value problem
\begin{equation}
\begin{cases} \begin{aligned} \max_u \{-f(x,u).DT(x)\}=1 \end{aligned} & \text{in } \mathcal{R}\setminus\mathcal{T}\text{,} \\
T(x)=0 & \text{in } \mathcal{T}\text{,}  \\
\begin{aligned} \lim_{x\rightarrow  \varsigma^{\mathcal{}}} T(x)=+\infty \end{aligned} & \forall \varsigma^{\mathcal{}}\in\mathcal{\partial\mathcal{R}}\text{.} \\
\end{cases} \label{2bvp}
\end{equation}  
where $f:\mathbb{R}^{d}\times U\rightarrow\mathbb{R}^{d}$ is a vector field describing a dynamical system,  $x(t_f)$ and $x(t_0)$ are the terminal and initial states of the trajectory, respectively, and the Lagrangian is defined as $L(x(t), u(t), t)\equiv1$. 
Here, $\partial\mathcal{R}$ denotes the boundary of the reachability set, $\mathcal{R}$. We use the Kru\v{z}kov  change of variable

\begin{equation}
v(x)=\begin{cases} \begin{aligned} 1-e^{-T(x)} \end{aligned} & \text{if } T(x)<+\infty \\
1 & \text{if } T(x)=+\infty \\
\end{cases} \label{kruzkov}
\end{equation}
to obtain a bounded value function. Under the controllability and comparison hypotheses of Ref.~\citenum{falcone1994}, the minimum-time function is characterized by
\eqref{2bvp} in the appropriate nonnegative solution class.
The transformed value function satisfies
\begin{equation*}
v(x)+\max_{u\in U}\{-f(x,u)\cdot Dv(x)\}=1,
\qquad x\in\mathcal{R}\setminus\mathcal{T},
\end{equation*}
with $v=0$ on $\mathcal{T}$. Under the controllability and regularity assumptions of Ref.~\citenum{bardi1990}, the discrete minimum-time approximation admits convergence estimates on compact subsets of the reachable set. The convergence rate depends on those assumptions. A spatial approximation bound for the Isaacs equation, the counterpart of Eq. \ref{2bvp}, was derived in (Ref.~\citenum{soravia1998}).

\section{Problem Formulation}\label{sec:formulation}
In the previous section, we established that temporal and spatial discrete approximations depend on the time step $h$, grid size $k$, and the regularity of the dynamics and cost function. The error bound \eqref{interp_bound} shows that the approximation can be improved through localized grid modifications by removing nodes from regions with low interpolation error and increasing node density in regions where interpolation introduces the most uncertainty. However, improving approximations and balancing the computational load through manual node placement is cumbersome. For instance, from approximation \eqref{bound_h_k}, we can infer that one would need to place or remove nodes in a manner that accounts for the variation and regularity properties of $r$, $f$, and the solution which may be inhomogeneous in   $\Omega$, while also reducing $k$ in regions where the approximation is less sensitive. 

Instead in this work, for a given $h$, we consider the problem of adaptive node refinement in $\Omega$, where the objective is to iteratively adjust the node distribution to minimize approximation error between $V_h$ and $V_h^k$ while balancing computational cost. The grid-refinement environment is formulated as a Markov Decision Process (MDP). The agent receives a local observation $s$ of the full environment state and learns a policy $\pi(a|s)$ to add or remove nodes within a selected domain. Thus, for a given time step $h$, solution function on an initial grid, and an interpolant, the agent modifies the grid to lower the interpolation error over \(\Omega\), while being encouraged to reduce the node population \(n\). The agent's observation and action spaces are $\mathcal{S}\subseteq\mathbb{R}^3$ and $\mathcal{A}\subseteq\mathbb{R}^2$, respectively.

Borrowing from the methodology in Foucart et al.\cite{foucart2023}, we randomly and uniformly sample a point $y$ from $\operatorname{int}(\Omega)$ for the action to be applied to it. At each training step $\kappa$, the agent receives local observations $s_\kappa\ \in \mathcal{S}$ and applies local actions \(a_\kappa \in \mathcal{A}\) to the same sampled point $y$. Hence, aside from a global reward signal, the agent does not observe any information from other nodes. The following subsections detail the state, action, and reward components of this MDP.    

\subsection{Grid Initialization}
We initialize the computational domain $\Omega\subset\mathbb{R}^d$. Then the initial grid $\mathcal{X}_0$ is generated using QMC sampling. The Sobol sequence generated, $\phi:[0, 1]^d\rightarrow\overline{\Omega}$, carries out scrambling and skipping techniques to maintain uniformity and randomness in the initial grid. 
Subsequently at each step $\kappa$, one node is sampled from a uniform distribution $y \sim A \, \text{Unif}([0,1]^d) + D, \quad y \in \Omega\subset\mathbb{R}^d$. The initial condition $x(t_0)$ and the target state $x(t_f)$ are included (fixed) in $\mathcal{X}$, and boundary constraints are applied.

\subsection{State Representation}
The observation at step $\kappa$, $s_{\kappa} =(\rho,\varepsilon,d_{\partial})_{\kappa}\in \mathcal{S}$, is represented by:
\begin{itemize}[align=left, labelsep=2pt, leftmargin=*]
\item \textbf{Local Density Estimate:} The local node density $\rho(\mathbf{x})$ is computed using $\mathbf{k}$-nearest neighbor (kNN) analysis
\begin{equation}
\rho(y) = \frac{\mathbf{k}}{n V_d(R_\mathbf{k})},
\end{equation}
where $R_\mathbf{k}$ is the distance to the $\mathbf{k}$-th nearest neighbor, $n$ is the total number of nodes as before, and $V_d(R_\mathbf{k})$ is the volume of a $d$-dimensional ball with radius $R_\mathbf{k}$. We have chosen to set $\mathbf{k} = \operatorname{round}(\sqrt{n})$ to approach a tradeoff between bias and variance.
    
\item \textbf{Local HJB Residual:} The local HJB error $\varepsilon(y)$ at a point $y$ is computed as
\begin{equation}
\varepsilon(y) = \lambda V(y) - f(y, u(y)) \cdot \nabla^{\mathcal{W}} [V(y)] - r(y, u(y)),
\label{HJB_residual}
\end{equation}
where $V(y)$ and $u(y)$ are the interpolated value function and control at the sampled point $y$, respectively, when $y \notin \mathcal{X}$. The gradient is numerically computed using a finite difference scheme; the $\nabla^{\mathcal{W}}$ notation emphasizes that the operator assumes differentiability. The viscosity solution may be non-differentiable. For piecewise-linear interpolation, the interpolant is differentiable within each simplex and almost everywhere in the domain. Across simplex interfaces, finite differences provide a numerical residual indicator rather than a classical pointwise derivative.

We use the magnitude of the residual $\varepsilon$ as an empirical indicator of local refinement needs. Because the control is interpolated at the evaluation point, this quantity measures consistency with that control. Reducing this indicator motivates adaptive node placement.

\item \textbf{Normalized Boundary Proximity Ratio:}
The Normalized Boundary Proximity Ratio, $d_{\partial}\in\mathbb{R}$, is a centering measure that quantifies the relative positioning of a sampled point $y$ within the domain $\Omega = A[0,1]^d + D$. We define this feature as the ratio of the minimum normalized distance to the boundary to the maximum normalized distance
\begin{equation}
d_{\partial}(y)=
\frac{\displaystyle\min_{1\le i\le d}\min\{z_i,1-z_i\}}
     {\displaystyle\max_{1\le i\le d}\max\{z_i,1-z_i\}},
\end{equation}
where $z=A^{-1}(y-D)$. By operating in the normalized space, this measure is invariant to the size or aspect ratio of $\Omega$. A ratio close to 1 indicates proximity to the center of the
normalized domain, while a ratio close to 0 indicates
proximity to its boundary.\end{itemize}

\subsection{Action Space}
At step $\kappa$, the agent selects an action $a_\kappa=(R, P)_\kappa \in \mathcal{A}$, where:
\begin{itemize}[align=left, labelsep=2pt, leftmargin=*]
\item $R \in [0, \mathbf{max_{r}}]$ scales the radius of a region around a sampled point in the domain where nodes are added or removed.
In this study, we have fixed $\mathbf{max_{r}}=0.45$ to avoid large local domains compared to the size of $\Omega$.    
\item $P \in [-1, 1]$ specifies the percentage change in node count within the $d$-ball of radius $R. R_\Omega$. 

Here, $R_\Omega$ denotes the radius of a $d$-dimensional ball that has the same volume as the defined domain. To make the action space adaptable for domains of varying sizes and shapes, we scale the output action $R$ by $R_{\Omega}$. This quantity provides a reference scale for when dimension $d$ changes.
In particular, for a given domain $\Omega = A[0, 1]^d + D$, its volume is
\begin{equation*}
V_{\Omega} = \lvert\det(A)\rvert,
\end{equation*}
where $\lvert\det(A)\rvert$ represents the scaling applied by the affine transformation. For a $d$-dimensional ball with radius $R$, the volume is given by
\begin{equation*}
V_d(R) = \frac{\pi^{d/2}}{\Gamma\left(\frac{d}{2} + 1\right)} R^d.
\end{equation*}

To determine $R_{\Omega}$, we solve for the radius of the $d$-ball with volume equal to $V_{\text{domain}}$.\ We have
\begin{equation}
R_{\Omega} = \left( \frac{V_{\Omega} \cdot \Gamma\left(\frac{d}{2} + 1\right)}{\pi^{d/2}} \right)^{\frac{1}{d}}.
\end{equation}
By normalizing the physical radius $\bar R$ using $R_{\Omega}$, we obtain the dimensionless action $R=\bar R/R_{\Omega}$, allowing the same action range to be used under uniform changes in domain size.

Let us denote the true radius by $\bar{R}=R.R_{\Omega}$. The process of mapping QMC nodes within a circular domain of radius $\bar{R}$ around a sampled point $y$ is implemented such that it avoids node duplication. Given the initial nodes $\{x_{i}\} \subset \mathbb{R}^d$, the number of new nodes, $\Delta n^{+}$, is calculated as a function of the percentage $P$ and the count of existing nodes within the radius $\bar{R}$. This is computed as
\begin{equation*}
\Delta n^{+}=\operatorname{round}(|P|n_{\bar R}),
\qquad P>0,
\qquad
n_{\bar R}=\sum_{\|x_i-y\|\le\bar R}1.
\end{equation*}

Each new node is generated from a Sobol point $\mathbf{q}=(q_1,\ldots,q_d)\in[0,1]^d$. For $d\ge2$, the radial and angular coordinates within the ball of radius $\bar R$ are defined by
\begin{equation*}
r_{\text{ball}}=\bar R q_d^{1/d},
\qquad
\theta_i=F_{d,i}^{-1}(q_i), \quad i=1,\ldots,d-2,
\qquad
\theta_{d-1}=2\pi q_{d-1},
\end{equation*}
where
\begin{equation*}
F_{d,i}(\theta)=
\frac{\displaystyle\int_0^\theta
      \sin^{d-i-1}(s)\,ds}
     {\displaystyle\int_0^\pi
      \sin^{d-i-1}(s)\,ds},
\qquad 0\le\theta\le\pi.
\end{equation*}
This transformation maps uniform measure on $[0,1]^d$ to normalized volume measure on the $d$-dimensional ball. For $d=2$, the intermediate polar angles are absent, and the mapping reduces to $r_{\text{ball}}=\bar R\sqrt{q_2}$ and $\theta_1=2\pi q_1$. The radial and angular coordinates are then converted into Cartesian coordinates relative to $y$ as
\begin{equation*}
x_p = y_p + r_{\text{ball}} \left( \prod_{j=1}^{p-1} \sin(\theta_j) \right) \cos(\theta_p), \quad \text{for } p = 1, \dots, d-1,
\end{equation*}
and for the last coordinate
\begin{equation*}
x_d = y_d + r_{\text{ball}} \prod_{j=1}^{d-1} \sin(\theta_j).
\end{equation*}
where $x=(x_1, x_2,\dots, x_d)$ denotes the coordinates of newly added node at step $\kappa$. On the other hand, the agent removes a specified percentage $P$ of nodes from a $d$-dimensional spherical region of radius $\bar{R}$, centered around a sampled point $y$. The removal process excludes fixed nodes, e.g., initial and terminal states.
First, we identify the nonfixed nodes within distance $\bar R$ of $y$. The number of nodes to remove is calculated by rounding to the nearest integer.\footnote{Alternatively, the ceiling rule $\Delta n^{-}=\lceil |P|n_{\bar R}\rceil$ ensures that any nonzero removal fraction requests at least one node whenever the local removal pool is nonempty.}
\begin{equation*}
\Delta n^{-}=\operatorname{round}(|P|n_{\bar R}),
\qquad
n_{\bar R}=\sum_{\substack{\|x_i-y\|\le\bar R\\
x_i\text{ nonfixed}}}1.
\end{equation*}
where $P<0$ is the removal percentage now. If the rounded node count for the selected action is zero, the grid remains unchanged at step $\kappa$. This can occur for sufficiently small $|P|$ or when the corresponding local node set is empty. Figure \ref{action_circ} demonstrates the node addition mechanism for a domain $\Omega$ with $d=2$ and a hypothetical grid $\mathcal{X}$ with $\bar{R}=2$.    
\end{itemize}

\subsection{Reward Function}
The computation of reward function depends on factors such as global node count, global error, and agent's choice of action. In particular, we  compute the  global error using a new QMC sampled set of points in $\Omega$ at every step $k$.  We compute the following $L_2$ norm of the error residual
\begin{equation}
\mathcal{E}\equiv\| \varepsilon \|_{L_2} = \sqrt{\int_{\Omega} \varepsilon(\mathbf{x})^2 d\mathbf{x}}\approx\sqrt{\frac{|\Omega|}{N}\sum_{i=1}^{N}\varepsilon(\mathbf{x}_i)^2},
\end{equation} 
where $\{\mathbf{x}_i\}\in\Omega$, $i=1, 2, \dots,N$ are the new QMC-sampled query points,  and $\varepsilon$ is defined in Eq. \eqref{HJB_residual}. The query points are resampled at every step $\kappa$. Below, we introduce the reward components
\begin{itemize}[align=left, labelsep=2pt, leftmargin=*]
\item \textbf{Error-Based Reward:} The normalized reduction in error:
\begin{equation}
R_E = \max\left\{-1,\,1-\frac{\mathcal{E}_{\kappa}}{\mathcal{E}_{0}}\right\}.
\end{equation}
\item \textbf{Node-Based Reward:} The normalized reduction in node count:
\begin{equation}
R_N = \max\left\{-1,\,1-\frac{n_{\kappa}}{n_{0}}\right\}.
\end{equation}
\item \textbf{Clipping Penalty:} Penalties are applied for actions outside of predefined bounds using:

\begin{equation*}
P_a =
-\sum_{i=1}^{2}
\min\left\{
1,\,
\frac{
    \max\{a_{\min,i}-a_i,0\}
    +
    \max\{a_i-a_{\max,i},0\}
}{a_{\max,i}-a_{\min,i}}
\right\}.
\end{equation*}

\end{itemize}
where $\mathcal{E}_{\kappa}$ and $n_{\kappa}$ are the global integrated error and current node population at step $\kappa$. Also, $\mathcal{E}_{0}$ and $n_{0}$ denote the reference integrated error and node count on the initial grid $\mathcal{X}$ at step $\kappa=0$. The total step reward is computed as
\begin{equation}
    \mathcal{R}=R_E + R_N + P_{a},
\end{equation}
where $P_a$ represents the penalty. If $n_{\kappa}\notin[n_{\min},n_{\max}]$, the episode terminates and the step reward is instead $-\kappa_{\max}$.

\subsection{Learning Framework}
To train the agent for adaptive grid refinement, we employ the Proximal Policy Optimization (PPO) algorithm\cite{schulman2017}. This  reinforcement learning technique balances exploration and exploitation while ensuring stable policy updates. PPO optimizes a policy \( \pi_{\theta}(a|s) \) and a value function \( V_{\phi}(s) \) by iteratively improving the policy based on experience tuples \( (s_\kappa, a_\kappa, r_\kappa, s_{\kappa+1}) \). The actor (policy) and critic (value function) networks are represented as
\begin{equation*}
a=
\operatorname{scale}_{R,P}(f_{\mu}(s))+\epsilon\odot f_{\sigma}(s),
\qquad
V_{\phi}(s)
=
f_L\circ f_{L-1}\circ\cdots\circ f_1(s).
\end{equation*}
where $a$ is sampled from $\pi_{\theta}(\cdot\mid s)$ before clipping, $\epsilon \sim \mathcal{N}(0,I)$ introduces Gaussian exploration noise, $\odot$ denotes componentwise multiplication, and $f_i$ represents the layers of the neural network for the critic. Before clipping, the policy samples a raw action from the Gaussian distribution
\begin{equation*}
\pi_{\theta}(\cdot\mid s)
=
\mathcal{N}(\mu_{\theta}(s),\Sigma_{\theta}(s)).
\end{equation*}
where $\mu_{\theta}(s)$ is the mean vector, and $\Sigma_{\theta}(s)$ is a diagonal covariance matrix. PPO is particularly well-suited for this application because it provides an approach to policy updates that includes the use of clipped objective function. The clipped objective penalizes updates that deviate significantly from the current policy, ensuring that policy changes remain stable and incremental. This approach helps prevent overfitting to specific actions in a single problem, making it particularly useful in applications such as adaptive grid refinement, where the agent must generalize across a large set of state-action pairs in varying environments. Moreover, PPO's ability to handle continuous action spaces makes it particularly useful in noisy continuous problems such as grid refinement. Furthermore, the stability and scalability of PPO make it suitable for applications of adaptive grid refinement to higher dimensional problems.
 
Our neural network setup for PPO features an actor network with two standard paths: one for the mean and another for the standard deviation of the Gaussian action distribution. The actor has a shared trunk with layer sizes 64, 64, and 32, followed by separate mean and standard-deviation branches, each with a 32-neuron hidden layer and a two-neuron output layer. ReLU activations are used in the hidden branches, with Tanh and Softplus activations for the mean and standard-deviation outputs, respectively. The critic network $V_{\phi}(s)$ is composed of two layers with 64 neurons each and ReLU activations. To ensure stability during training, layer normalization is applied in all networks. Algorithm \ref{agr_alg} summarizes the environment interactions within a refinement episode. PPO updates the actor and critic using the collected transitions. 
\begin{figure}[htbp]
\centering
\includegraphics[width=2.8in]{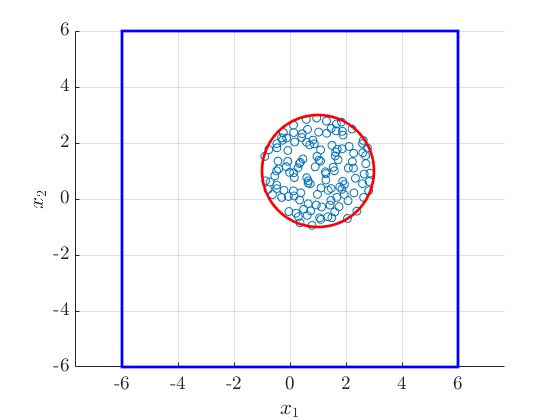}
\caption{Demonstration of Node Addition Mechanism for a Hypothetical Grid $\mathcal{X}$ within Domain $\Omega=[-6,6]^2$}
\label{action_circ}
\end{figure}

\begin{algorithm}[htbp]
\caption{Adaptive Grid Refinement Episode Used in PPO Training}
\label{agr_alg}
\begin{algorithmic}[1]
\Require Domain $\Omega$; initial data $(\mathcal{X}_0,V_0,u_0)$ with designated fixed nodes; reference residual norm $\mathcal{E}_0>0$; policy $\pi_\theta$; action bounds $a_{\min},a_{\max}$; node limits $n_{\min},n_{\max}$; maximum episode steps $\kappa_{\max}\ge1$
\Ensure Episode transitions for PPO and final grid data $(\mathcal{X},V,u)$

\State $\mathcal{X}\gets\mathcal{X}_0$, $V\gets V_0$, $u\gets u_0$
\State $n_0\gets|\mathcal{X}_0|$, $n\gets n_0$, $\kappa\gets0$, $\text{done}\gets\text{False}$
\State Initialize the interpolants of $V$ and $u$
\State Compute the equivalent-volume radius $R_\Omega$
\State Sample $y_0$ uniformly from $\Omega$ and compute its local observation $s_0$

\While{$\kappa<\kappa_{\max}\text{ and }\neg\text{done}$}
\State \textbf{Select and prepare the action:}
\State Sample $a_\kappa\sim\pi_\theta(\cdot\mid s_\kappa)$
\State $(R,P)\gets\operatorname{clip}(a_\kappa,a_{\min},a_{\max})$
\State $\bar R\gets R R_\Omega$

\State \textbf{Modify the grid:}
\If{$P>0\text{ and }\bar R>0$}
\State Count all nodes within distance $\bar R$ of $y_\kappa$ as $n_{\bar R}$
\State $\Delta n^+\gets\operatorname{round}(P n_{\bar R})$
\State Add $\Delta n^+$ QMC nodes within distance $\bar R$ of $y_\kappa$, subject to domain and duplicate checks
\ElsIf{$P<0$}
\State Count nonfixed nodes within distance $\bar R$ of $y_\kappa$ as $n_{\bar R}$
\State $\Delta n^-\gets\operatorname{round}(|P|n_{\bar R})$
\State Remove $\Delta n^-$ nodes sampled uniformly without replacement from this removal pool
\EndIf

\State Interpolate $V,u$ at added nodes and discard data for removed nodes
\State Rebuild their interpolants if the grid changed
\State $n\gets|\mathcal{X}|$

\State \textbf{Evaluate the reward and termination conditions:}
\State Estimate $\mathcal{E}_{\kappa+1}$ using newly sampled QMC query points in $\Omega$
\State Compute the clipped reward components $R_E,R_N$ using $\mathcal{E}_{\kappa+1},n$ and the references $\mathcal{E}_0,n_0$
\State Compute $P_a$ using the unclipped action $a_\kappa$
\State $\mathcal{R}_\kappa\gets R_E+R_N+P_a$
\State $\text{done}\gets(\kappa+1\ge\kappa_{\max}\text{ or }n<n_{\min}\text{ or }n>n_{\max})$
\If{$n<n_{\min}\text{ or }n>n_{\max}$}
\State $\mathcal{R}_\kappa\gets-\kappa_{\max}$
\EndIf

\State Sample $y_{\kappa+1}$ uniformly from $\Omega$ and compute its local observation $s_{\kappa+1}$
\State Supply $(s_\kappa,a_\kappa,\mathcal{R}_\kappa,s_{\kappa+1},\text{done})$ to the PPO trainer
\State $\kappa\gets\kappa+1$
\EndWhile

\State Compute final episode statistics
\State \Return $(\mathcal{X},V,u)$
\end{algorithmic}
\end{algorithm}

\section{Policy Optimization for the Baseline}\label{sec:baseline}
We consider the following 2-dimensional time-optimal control problem\cite{cristiani2010} to supply the training data. The two state variables are \(x_1\) and \(x_2\), and the control input \(u\) is a scalar. The system dynamics are defined as
\begin{equation*}\text{(P1)} \quad
\begin{aligned}
\dot{x}_1 &= c(x_1, x_2) \cos(u) \\
\dot{x}_2 &= c(x_1, x_2) \sin(u)
\end{aligned}
\quad
c(x_1, x_2) =
\begin{cases} 
1 & \text{if } x_2 \leq 1 \\
(x_2 - 1)^2 + 1 & \text{if } x_2 > 1.
\end{cases}
\end{equation*}
The goal is to compute \(T(x_0)\), the minimum time to reach the terminal state, given the initial condition \(x(t_0) = (-2.5, 0)\) and the terminal condition \(x(t_f) = (3, 0)\) in $\Omega=[-6, 6]^2$. The initial grid supplied to the agent at \(\kappa = 0\) is illustrated in Figure~\ref{initial_grid}.
The time-step used was $h=0.01$.
\begin{figure}[htbp]
 \centering
\subfigure[Initial QMC-generated Grid with 1500 nodes at $\kappa=0$. ]{
\includegraphics[width=0.47\textwidth]{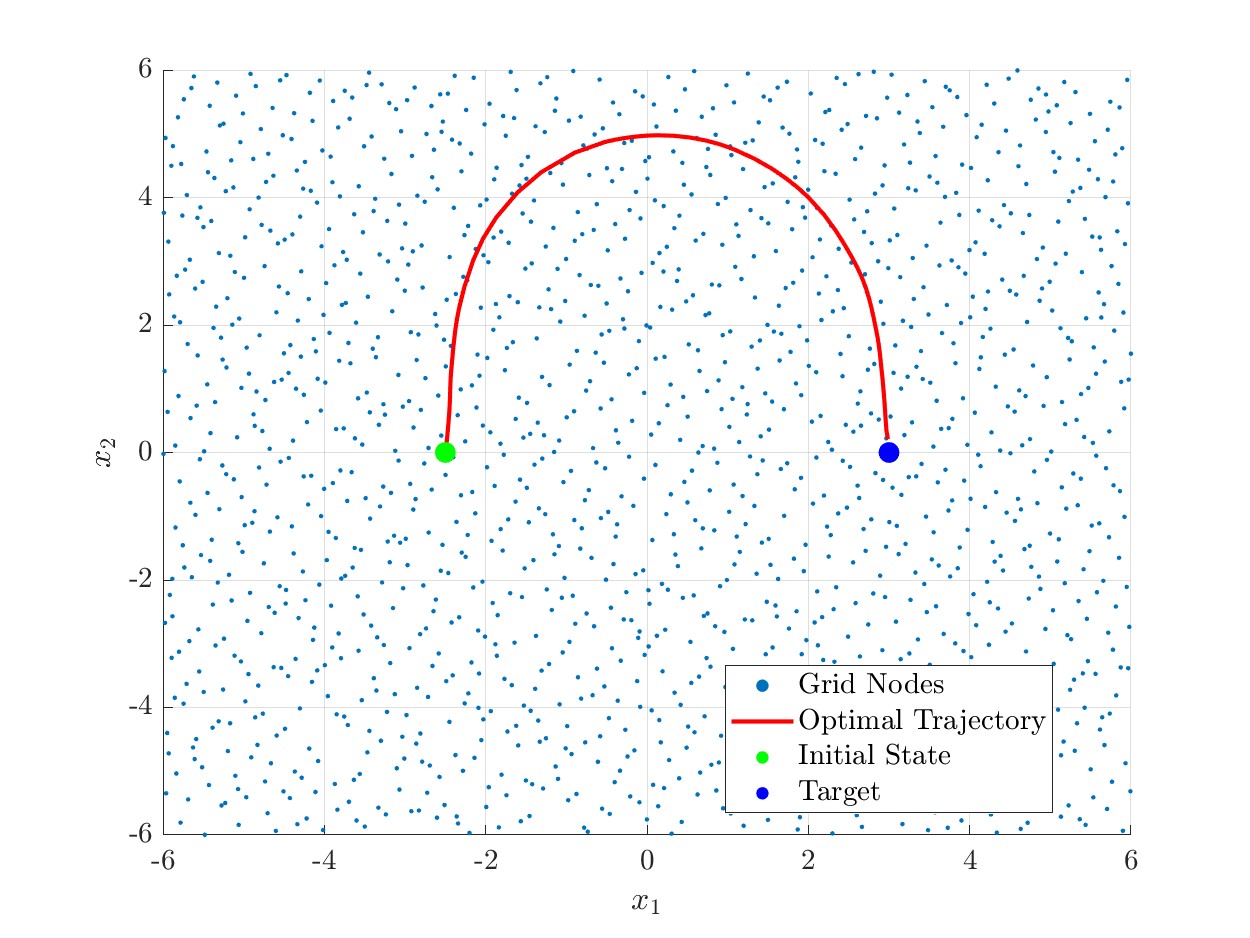}
\label{initial_grid}}
\hspace{0.02\textwidth}
\subfigure[Final Grid After Adaptive Refinement.]{
\includegraphics[width=0.47\textwidth]{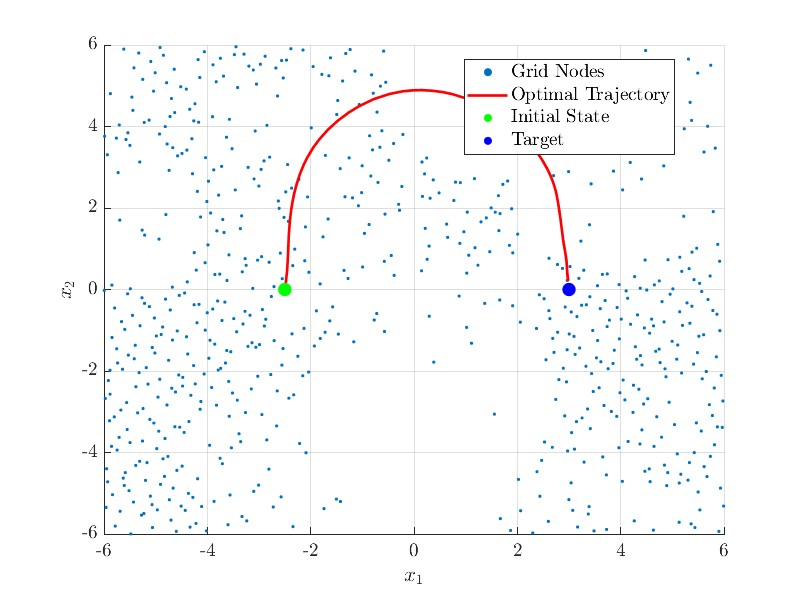}
\label{final_grid}}
\caption{Comparison of the Initial and Final Grids with Their Respective State Trajectories.}
\label{comparison_grids}
\end{figure}

After 200,000 training episodes, as detailed in Algorithm \ref{agr_alg}, the agent achieves modified grids (i.e. Figure~\ref{final_grid}) with learning curves provided in Figures~\ref{baseline_reward} and~\ref{low_min}. The former is the baseline agent and is trained with bounds $[n_{\text{min}}, n_{\text{max}}]=[450, 2250]$, and the latter is with  $[n_{\text{min}}, n_{\text{max}}]=[10, 2250]$.
For the test episode associated with figures \ref{comparison_grids}--\ref{surface_Hessian} here, the cumulative reward breakdown was $\sum_{\kappa}R_E=8.4013$ and $\sum_{\kappa} R_N=9.5832$, with the estimated integrated residual norm lowered from $6.8150$ to $4.8719$ and node population lowered from $n_0=1502$ nodes to $n_{\kappa_{\text{max}}}=666$. Figure \ref{comparison_grids} compares the state trajectories obtained on the initial and refined grids. In particular, in Figure~\ref{initial_hessian}, it is noteworthy that the agent increases the node concentration in regions with high variability and along lines of non-differentiability, consistent with the explanation provided in Remark~\ref{bramble_hilbert}. This is while the agent removes or relocates nodes from regions with less variability and solution details, as seen in the surface triangulation (Figure \ref{surface_tri}). Similarly, Figures \ref{initial_error} and \ref{final_error} suggest that the agent was able to lower the HJB residual magnitude over the domain $\Omega$, where the darker blue contour signifies a lower error.      
\begin{figure}[htbp]
\centering
\subfigure[Residual Contour on the Initial QMC-generated Grid]{
\includegraphics[width=0.47\textwidth]{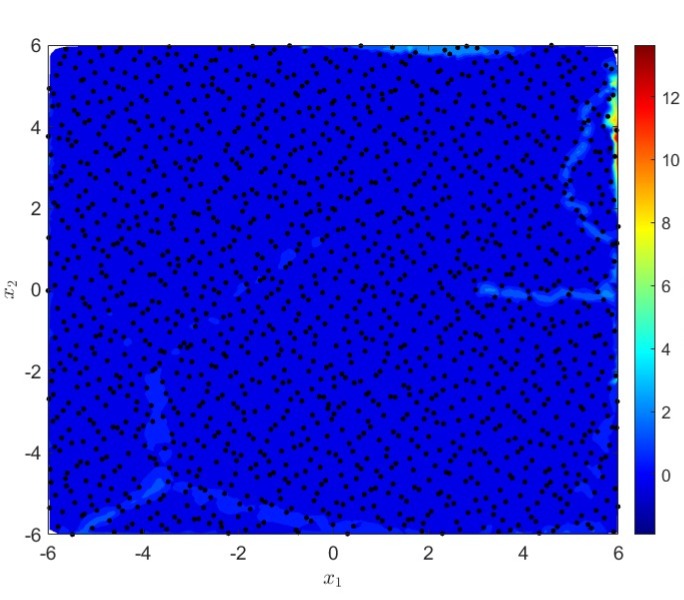}
\label{initial_error}}
\hspace{0\textwidth}
\subfigure[Final Residual Contour on the Modified Grid]{
\includegraphics[width=0.47\textwidth]{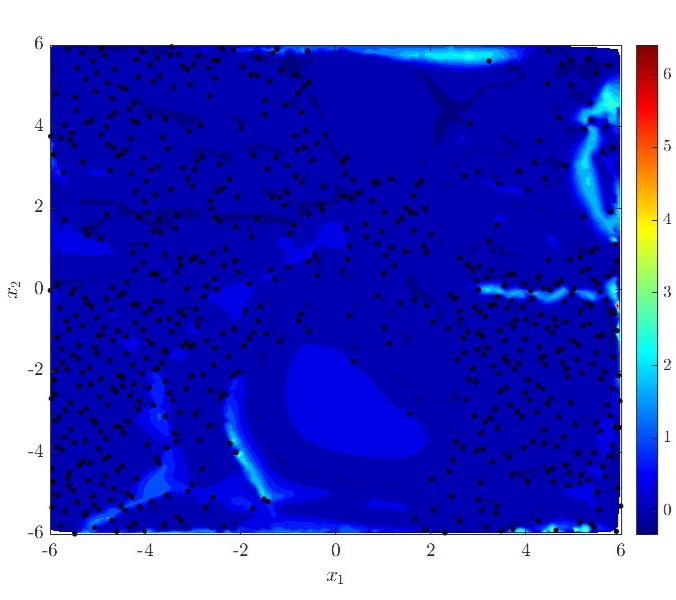}
\label{final_error}}
\caption{Function Value Contour of the Residual Function $\varepsilon$}
\label{contour_comparison}
\end{figure}
   
\begin{table}
\caption{Hyperparameters for Baseline PPO Agent}
\label{table_ppo_hyperparams}
\centering
\begin{tabular}{c | l | c}
\hline
\textbf{Parameter} & \textbf{Description} & \textbf{Value} \\
\hline
$\gamma$           & Discount factor for future rewards & 0.99 \\
$\lambda$          & GAE (Generalized Advantage Estimation) factor & 0.95 \\
$\epsilon$         & Clipping factor for PPO objective & 0.2 \\
$c_2$              & Entropy loss weight & 0.05 \\
$M$                & Mini-batch size & 512 \\
$K$                & Number of epochs per update & 3 \\
$\alpha_{\pi}$     & Actor network learning rate & $2 \times 10^{-5}$ \\
$\alpha_V$         & Critic network learning rate & $1 \times 10^{-3}$ \\
$\kappa_{\text{max}}$ & Number of steps per training episode & $32$
\\
\hline
\end{tabular}
\end{table}

\begin{figure}
\centering
\subfigure[Learning Curve of the Baseline, Trained on the Converged Value Function $v(x)= 1-e^{-T(x)}$ of Problem 1]{
\includegraphics[width=0.46\textwidth]{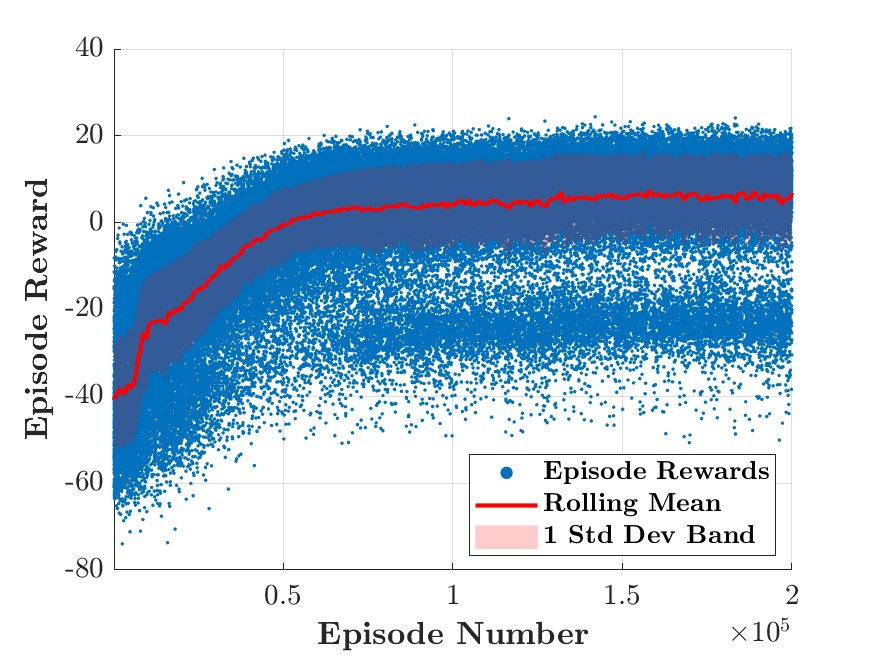}
\label{baseline_reward}}
\hspace{0.02\textwidth}
\subfigure[Learning Curve of the Agent Trained with 10 Minimum and 2250 Maximum Node Limits. Also Trained on Data from Problem 1]{
\includegraphics[width=0.46\textwidth]{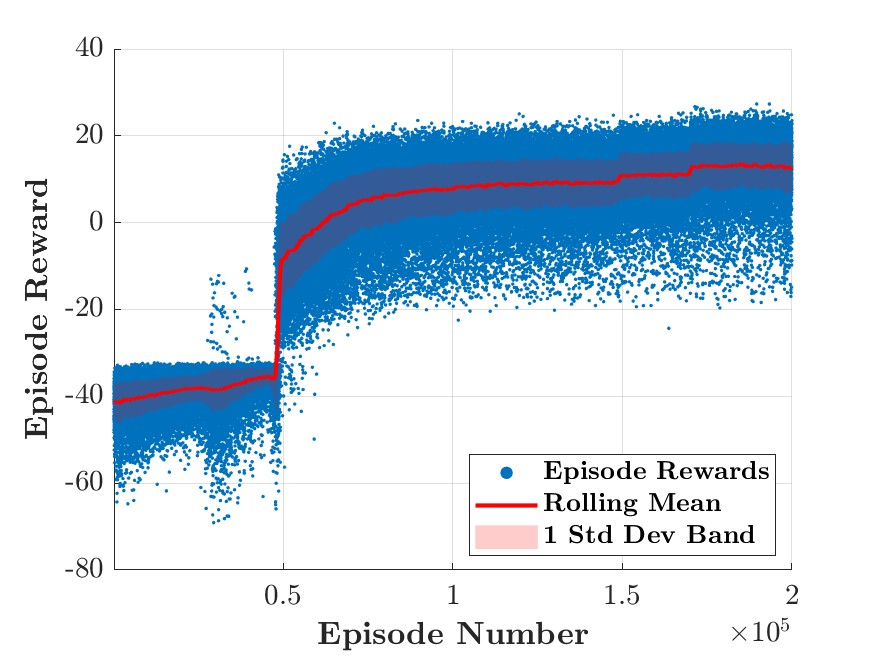}
\label{low_min}}
\caption{Comparison of Learning Curves between the Baseline and the Case with Broader Node Population Limits}
\label{fig:comparison_learning_curves}
\end{figure}
\begin{figure}
\centering
\subfigure[Solution Surface $V=T_h=-\log(1 - v_{h})$ and the Triangulation on the Modified Final Grid]{
\includegraphics[width=0.46\textwidth]{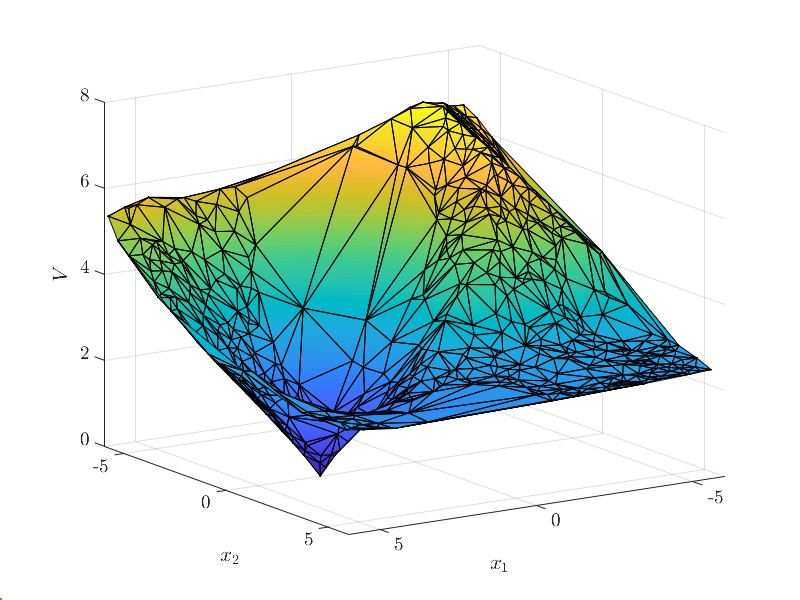}
\label{surface_tri}}
\hspace{0.02\textwidth} 
\subfigure[Frobenius Norm of the Hessian of $T$, Computed as $\log (1 + ||\nabla^2 T||)$ on the Initial Grid]{
\includegraphics[width=0.46\textwidth]{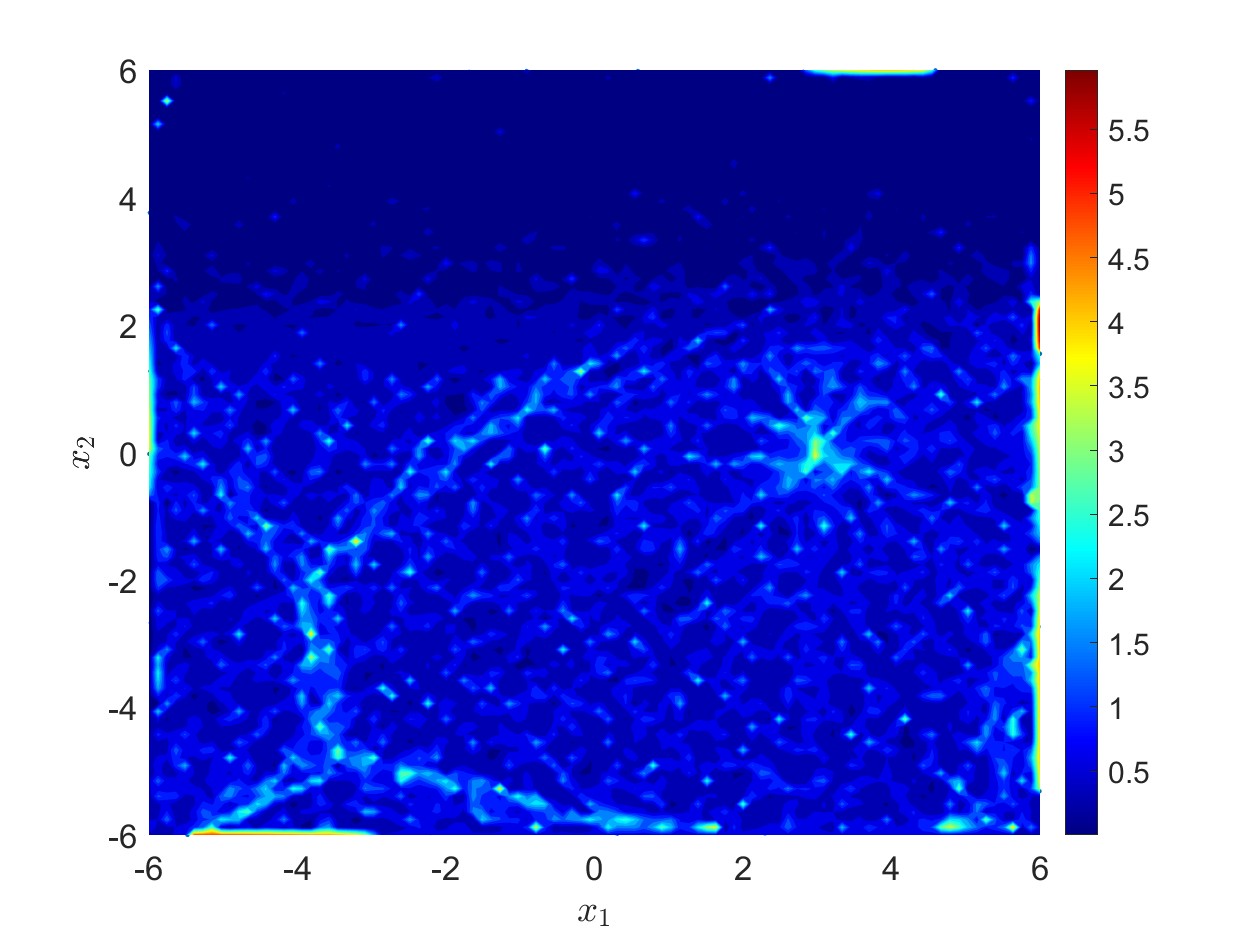}
\label{initial_hessian}}
\caption{Depiction of Solution Surface on the Final Grid and the Solution Variability on the Initial Grid}
\label{surface_Hessian}
\end{figure}

\begin{figure}[H]
\centering\includegraphics[width=3.7in]{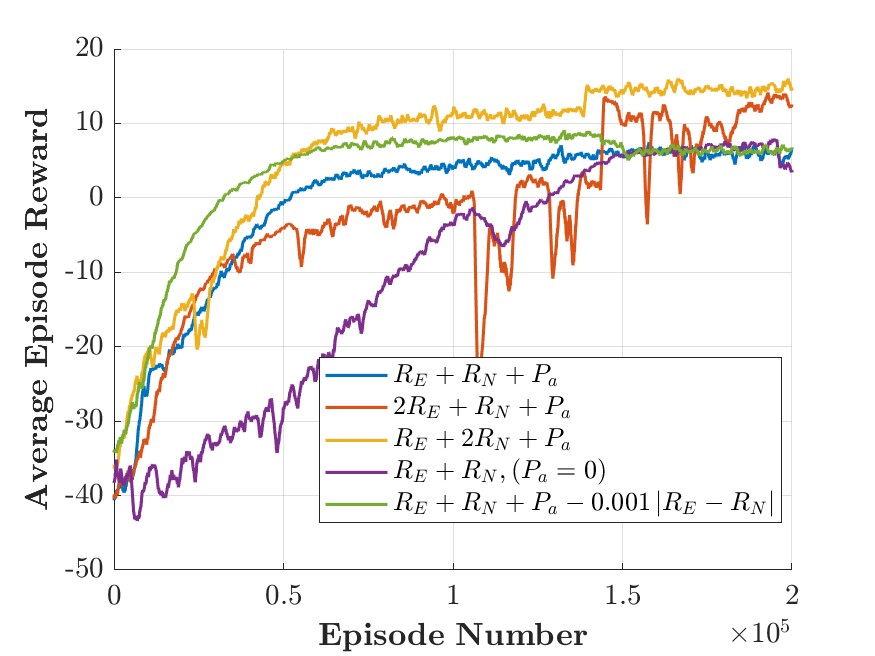}
\caption{Reward Function Variation, Trained with Baseline Hyperparameters}
\label{reward_variation}
\end{figure}

Table~\ref{table_ppo_hyperparams} shows the hyperparameters of the baseline case for training the agent on Problem 1. In Figure \ref{reward_variation}, we have tested five different reward functions. The baseline refers to the case described earlier with $\mathcal{R}=R_E+R_N+P_a$. The cases favoring error reduction and favoring node reduction each have a step reward defined as $\mathcal{R}=2R_E + R_N + P_{a}$ and $\mathcal{R}=R_E + 2R_N + P_{a}$, respectively. We also  tested a reward function with a balancing term to force the agent to create a more optimal trade-off between approximation accuracy and computational  burden. The reward for this case  is given by $\mathcal{R}=R_E + R_N + P_{a}-0.001|R_E-R_N|$. Finally, the reward associated with no action penalty is similar to the baseline case, but with $P_a=0$ for all $\kappa$. From Figure \ref{reward_variation}, we find that the case with no out-of-bounds action penalty struggles with learning efficiency. Similarly, when agent was forced to prioritize error reduction, the learning became unstable which signals a need for further hyperparameter tuning. In general, the agent found it easier to prioritize node reduction as seen for the case with $\mathcal{R}=R_E + 2R_N + P_{a}$. This observation may become obvious when considering the complexity involved when refining node geometries to reduce approximation uncertainty.

\section{Ablation Study}\label{sec:ablation}
Figures \ref{clip_ablation}--\ref{batch_ablation} describe the training results obtained for various hyperparameters, compared against the baseline agent (Table~\ref{table_ppo_hyperparams}). In particular, Figure \ref{entropy_ablation} demonstrates the impact of different entropy coefficients, \(c_2\), on learning stability and exploration. Generally, smaller entropy coefficient, i.e. \(c_2 = 0.01\), result in a more stable and effective training in the environment. Surprisingly, the $c_2=0.01$ outperforms the baseline case. 
Figure \ref{rates_ablation} shows the influence of the learning rates on the actor \(\alpha_\pi\) and critic \(\alpha_V\). We see that higher learning rates for the actor, i.e. \(2 \times 10^{-4}\), lead to faster initial improvement but introduce instability. In Figure \ref{clip_ablation}, a clip factor of $0.2$  performs nominally.
Varying the GAE factor does not introduce any significant changes overall. For Mini Batch size, we observe that both very small  sizes (i.e. 256), and very large sizes (i.e. 2048) introduce instability in the training process. Finally, lower discount factors exhibit faster learning rates, however, they show signs of catastrophic forgetting. A discount factor of $\gamma=0.9$ achieves the highest average reward  compared to its counterparts, but shows signs of instability during the training. The average cumulative reward returns here are computed over a rolling window of 1000 episodes.   
\begin{figure}[H]
\centering
\subfigure[Impact of Clipping Factor on Training Performances]{
\includegraphics[width=0.48\textwidth]{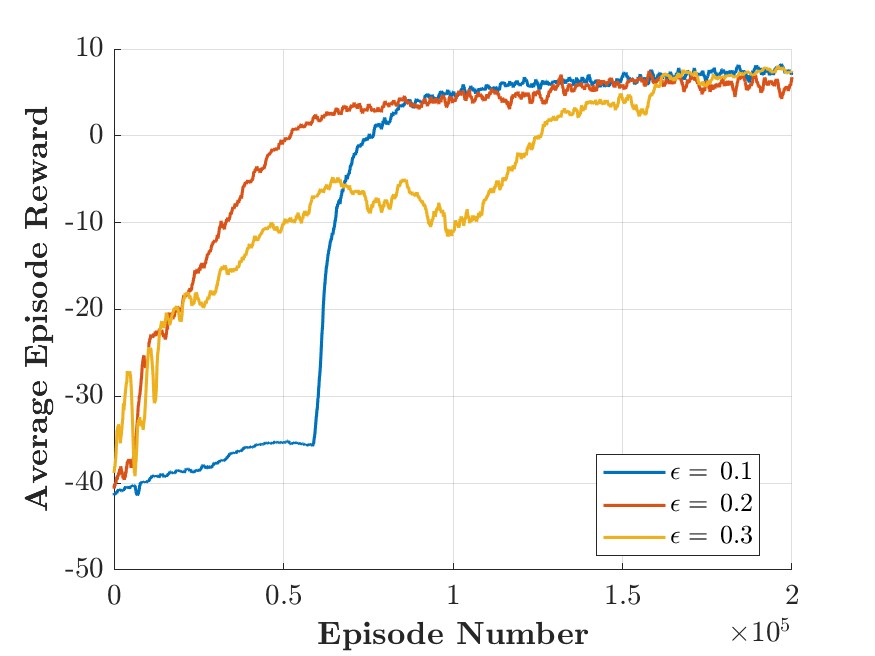}
\label{clip_ablation}}
\hfill
\subfigure[Effect of Actor and Critic Learning Rates on PPO Performance]{
\includegraphics[width=0.48\textwidth]{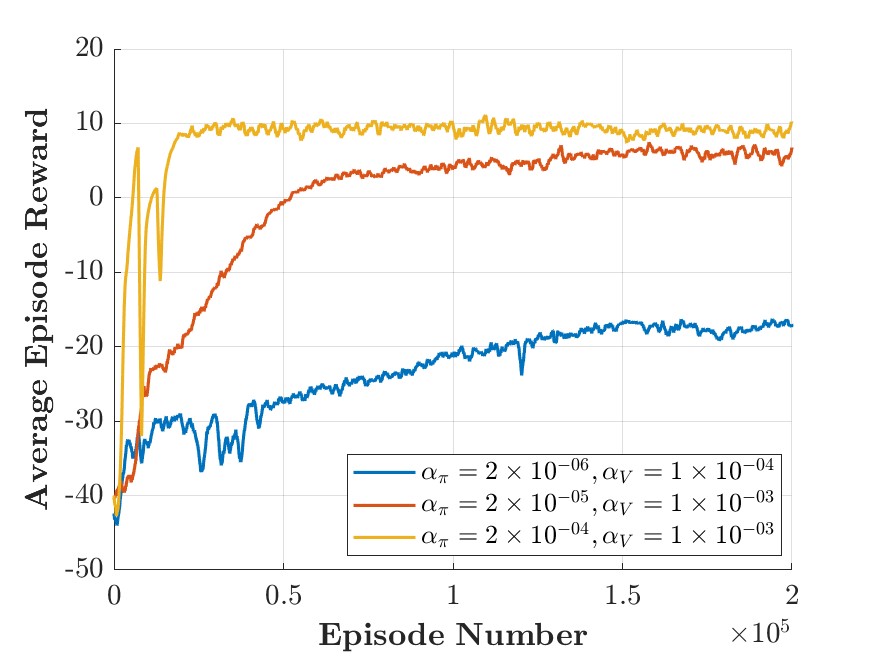} 
\label{rates_ablation}}
    
\vspace{0.1cm}
    
\subfigure[Effect of Discount Factor on Reward Dynamics]{
\includegraphics[width=0.48\textwidth]{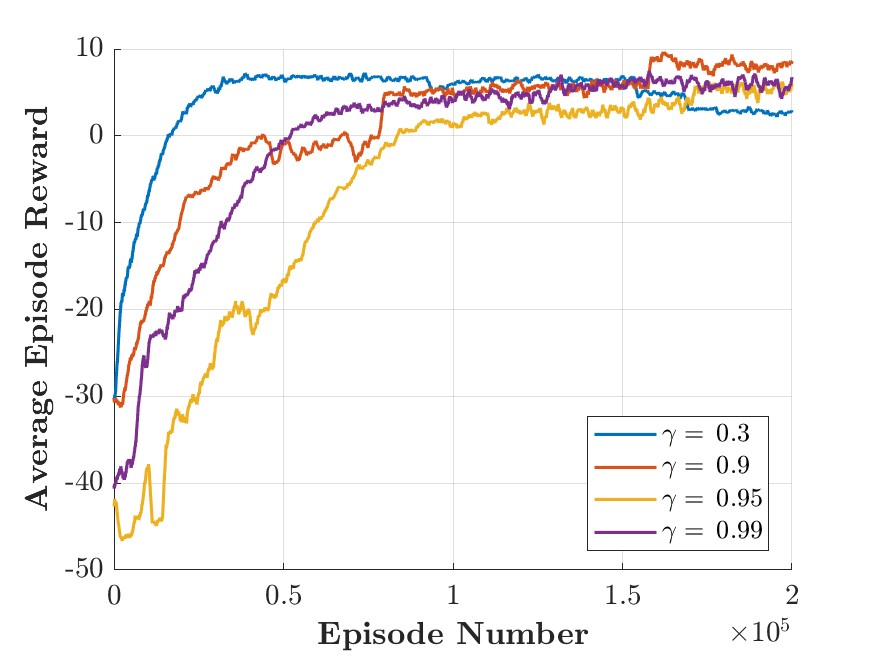}
\label{gamma_ablation}}
\hfill
\subfigure[Impact of Entropy Coefficients on PPO Training Stability and Learning Efficiency]{
\includegraphics[width=0.48\textwidth]{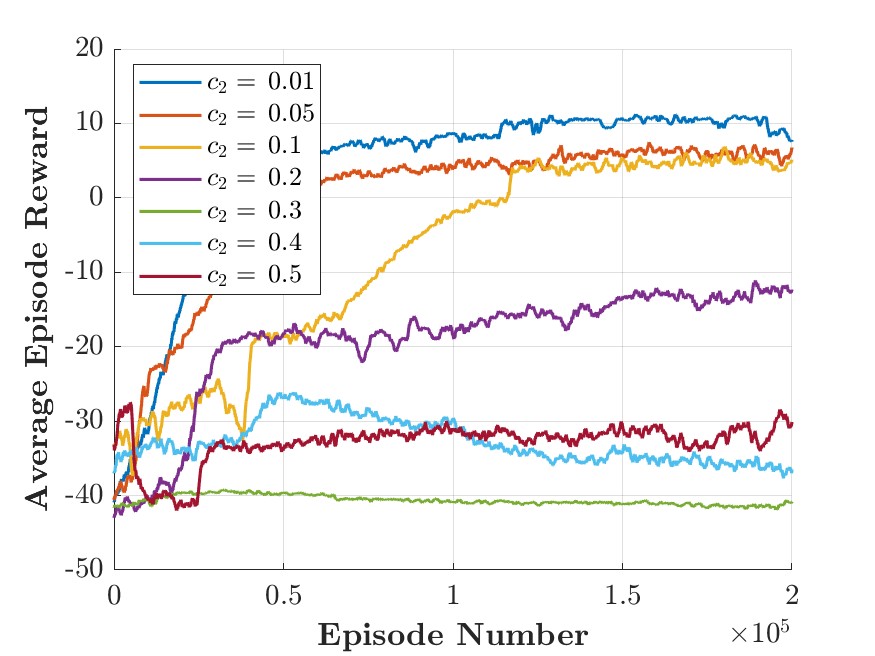} 
\label{entropy_ablation}}
    
\vspace{0.1cm}
    
\subfigure[Generalized Advantage Estimation (GAE) Variations]{
\includegraphics[width=0.48\textwidth]{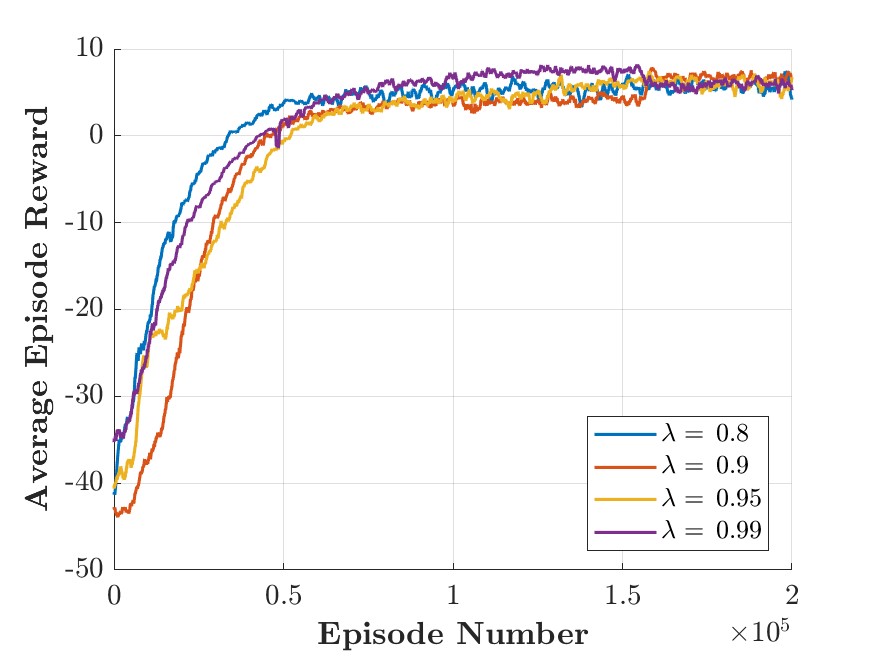}
\label{gae_ablation}}
\hfill
\subfigure[Impact of Mini-Batch Size on PPO Training Efficiency]{
\includegraphics[width=0.48\textwidth]{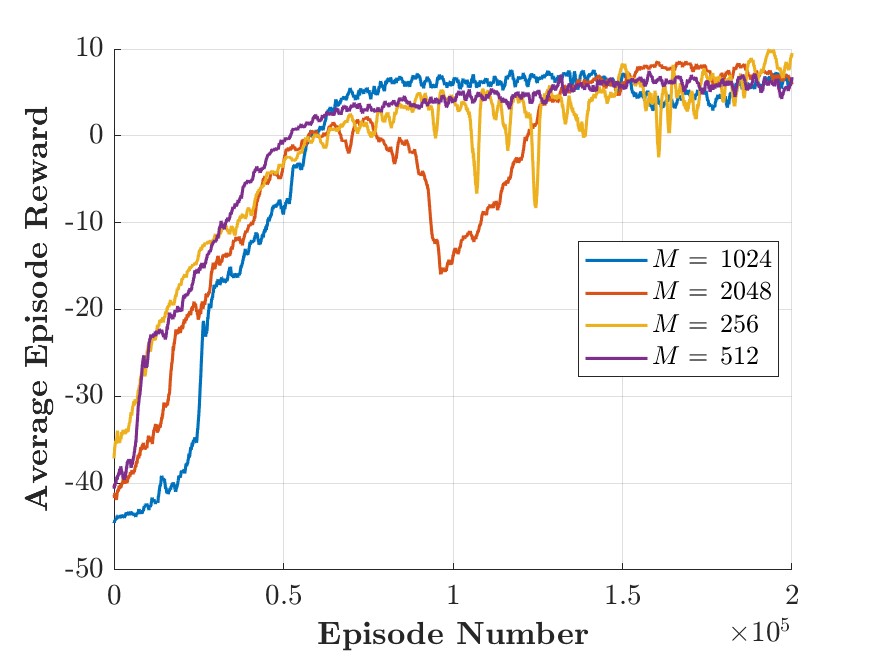}
\label{batch_ablation}}
    
\caption{Ablation Studies Showing the Impact of Different Hyperparameters on PPO\ Training.}
\label{fig:ablation_study}
\end{figure}

\section{Experiment and Results}\label{sec:experiments}

\subsection{Zero-Shot Learning Demonstration}
In this section, we demonstrate the applicability of the proposed adaptive grid refinement algorithm to higher dimensions and different domains by applying the agent trained on Problem~1 to a three-dimensional infinite-time horizon optimal control problem. Consider the problem of spacecraft detumbling, or stabilization of the angular rates of a rigid body.  The spacecraft dynamics are governed by the Euler rigid body equations, augmented by a control forcing term.
\begin{equation*}
\text{(P2)} \quad
\begin{aligned}
&\text{minimize } \quad J(u)=\frac{1}{2}\int_{t_0}^{\infty} \left[\omega^TQ\omega+u^TRu\right]\,dt,\\
&\text{subject to} \quad I \dot{\omega} = S(\omega) I \omega + b u, \\
&\text{where} \quad S(\omega) =
\begin{bmatrix}
0 & \omega_3 & -\omega_2 \\
-\omega_3 & 0 & \omega_1 \\
\omega_2 & -\omega_1 & 0
\end{bmatrix}.
\end{aligned}
\end{equation*}
Here, \( \omega \in \mathbb{R}^3 \) is the angular velocity of the spacecraft in a body-fixed reference frame, \( u \in \mathbb{R}^3 \) is the control torque vector, \( I = \text{diag}(1, 1.2, 0.8) \) is the principal moment of inertia matrix, and \( b = 1 \) is a constant scaling the applied control. The initial condition is \( \omega(t_0) = \omega_0 =[1, -0.8, 0.5] \) on $\Omega=[-3, 3]^3$, and the time range is \( t \in [t_0, \infty) \) with $h=0.01$. The running-cost weights are $Q=R=I_{3\times3}$.
 
In the test episode, the agent obtained a cumulative reward of 2.7774 with a breakdown of $\sum_{\kappa} R_E=0.9573$ and $\sum_{\kappa} R_N=1.9581$. The estimated integrated residual norm over $\Omega$ was $515.16$ which is lower compared to its initial value of $544.57$, and the number of nodes were lowered from $1502$ to $ 1170 $. On a subsequent episode, the estimated integrated residual norm was lowered from $535.4$ to $526.6$, while the number of nodes was reduced from $n_0=1502$ to $n_{\kappa_{\text{max}}}=1187$. Figures \ref{euler_grid} \& \ref{euler_traj} show, respectively, the modified grid and the trajectories generated on this grid for the first test episode. 
\begin{figure}[htbp]
\centering
\subfigure[Adaptively Modified Grid in $\Omega\subset\mathbb{R}^3$]{
\includegraphics[width=0.455\textwidth]{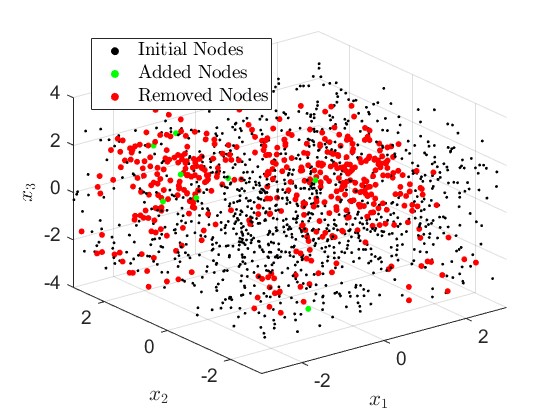}
\label{euler_grid}}
\hspace{0.02\textwidth}
\subfigure[Resulting State (Angular Rate) and Control (Torque) Trajectories on the Modified Grid]{
\includegraphics[width=0.455\textwidth]{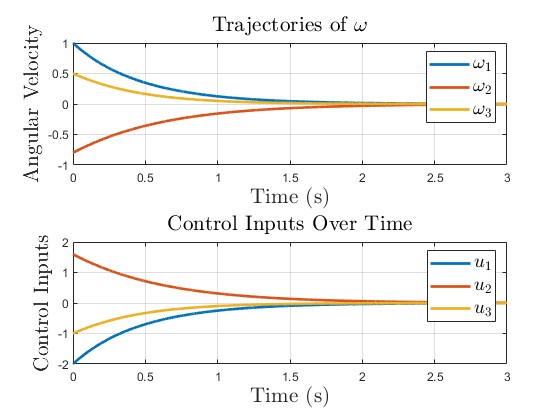}
 \label{euler_traj}}
    
\caption{Solution Trajectories of the Euler Stabilization Problem}
\label{fig:euler_grids}
\end{figure}

The next problem is an example of a 6-dimensional undiscounted infinite horizon control problem, augmenting the Euler dynamics with the three-state kinematic parameterization developed by Tsiotras \& Longuski\cite{tsiotras1995} to describe the orientation of the spacecraft. The control input applying torque to the spacecraft indirectly impacts the additional kinematic states $w_1$, $w_2$, and $z$ via its direct forcing of the angular velocity vector, $\omega$, as outlined in the Euler dynamics (P2).
\begin{equation*}
\text{(P3)} \quad
\begin{aligned}
I \dot{\omega} &= S(\omega) I \omega + b u, \\
\begin{bmatrix}\dot{w}_1 \\ \dot{w}_2 \\ \dot{z} \end{bmatrix} &= 
\begin{bmatrix}
\frac{1}{2} + \frac{1}{2}(w_1^2-w_2^2) & w_1w_2 & w_2 \\ 
w_1w_2 & \frac{1}{2} - \frac{1}{2}(w_1^2-w_2^2) & -w_1 \\ 
-w_2 & w_1 & 1
\end{bmatrix} \omega.
\end{aligned}
\end{equation*}
For this problem, the running cost weights were $Q=I_{6\times6}$ and $R=10I_{3\times3}$. Cost accumulation stops upon reaching the target, with zero terminal cost. The trajectories of this problem are shown below in Figure \ref{attitude}, where the feedback control is computed on the modified grid in $\Omega=[-1, 1]^6$.
The solution converged with $h=0.01$ on the initial grid, the terminal state was fixed at $(0, 0, 0, 0.5, 0.5, 0.5)$ as a boundary condition with $V=0$, and the initial state was chosen as $(0.05, -0.03, 0.10, 1, 0.5, 0.5)$. The node population was lowered from $n_0=5002$ to $n_{\kappa_{\text{max}}}=1343$, and $\mathcal{E}$ remained close to its initial value of $\mathcal{E}_0=10.30$ at 10.26.  
\begin{figure}[H]
\centering\includegraphics[width=3.7in]{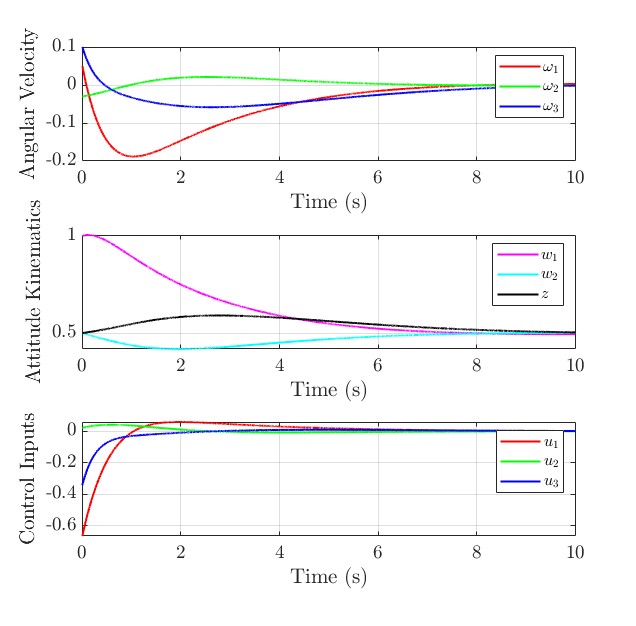}
\caption{Spacecraft Attitude Trajectories on the Modified Grid}
\label{attitude}
\end{figure}
The spacecraft regulation examples of this section use undiscounted
Bellman updates and assess empirical transfer beyond
the positive-discount setting of the preceding analysis.

\subsection{Applications of Grid Refinement to Fixed-Point Iterative Methods}
We now demonstrate the application of adaptive grid refinement to  approaches used for solving optimal control problems on grids, e.g. feedback optimal control problems in (Ref.~\citenum{chilan2020}). This application, outlined in Algorithm \ref{adaptive_refinement_alg}, is used iteratively within the fixed-point solver to adjust the grid structure while the solver is active. Specifically, the agent is allowed to modify the grid until a specified iteration stopping tolerance, $\epsilon_V$, is reached. At scheduled solver iterations, an intermediate refinement  episode is performed when $\mathcal{E}>\tau$. If $\mathcal{E}\le\tau$ at a scheduled refinement, a final episode is performed, after which the grid is held fixed. Each episode terminates when its cumulative reward becomes positive, the maximum number of actions is reached, or the environment signals termination. Other variations of this algorithm may be feasible, depending on the application.

\begin{algorithm}[H]
\caption{Integrating Adaptive Grid Refinement in HJB Solver}
\label{adaptive_refinement_alg}
\begin{algorithmic}[1]
\Require Initial grid $\mathcal{X}_0$, value function $V_0$, control function $u_0$, agent $\pi_\theta$, maximum episode steps $\kappa_{\max}\ge1$, residual threshold $\tau$, stopping tolerance $\epsilon_V>0$, initial refinement interval $d_{\text{iter}}\ge1$
\Ensure Approximate value function $V$ and control $u$ on grid $\mathcal{X}$, satisfying the iteration stopping criterion

\State $\mathcal{X}\gets\mathcal{X}_0$, $V\gets V_0$, $u\gets u_0$
\State $k\gets0$, $\delta\gets+\infty$, $\text{grid static}\gets\text{False}$

\While{$\delta>\epsilon_V$}
\State \textbf{Fixed-point update:}
\State $V_{\text{old}}\gets V$
\State Compute one Bellman update of $V$ and $u$ on $\mathcal{X}$
\State $\delta\gets\max_{x_i\in\mathcal{X}}|V(x_i)-V_{\text{old}}(x_i)|$
\State $k\gets k+1$
\State Compute the estimated residual norm $\mathcal{E}$

\If{$\neg\text{grid static}\text{ and }k\ge d_{\text{iter}}$}
\State \textbf{Refinement episode:}
\State $\text{final episode}\gets(\mathcal{E}\le\tau)$
\State $s\gets\text{Reset}(\mathcal{X},V,u)$
\State $\kappa\gets0$, $\mathcal{R}_{\text{cumulative}}\gets0$, $\text{done}\gets\text{False}$

\While{$\kappa<\kappa_{\max}\text{ and }\mathcal{R}_{\text{cumulative}}\le0\text{ and }\neg\text{done}$}
\State Sample action $a\sim\pi_\theta(\cdot\mid s)$
\State $(s,\mathcal{R},\text{done})\gets\text{Step}(a)$
\State $\mathcal{R}_{\text{cumulative}}\gets\mathcal{R}_{\text{cumulative}}+\mathcal{R}$
\State $\kappa\gets\kappa+1$
\EndWhile

\State Update $\mathcal{X}$, $V$, and $u$ from the refinement environment
\If{$\text{final episode}$}
\State $\text{grid static}\gets\text{True}$
\EndIf
\State $\delta\gets+\infty$
\State $k\gets0$
\State $d_{\text{iter}}\gets2d_{\text{iter}}$
\EndIf
\EndWhile

\State \Return $(V,u,\mathcal{X})$
\end{algorithmic}
\end{algorithm}

To demonstrate, we implemented Algorithm \ref{adaptive_refinement_alg} to solve Problem 1, starting from a naive guess.
In particular, the solver with the integrated grid refinement algorithm met the iteration stopping criterion after 2543 iterations with a tolerance of $\epsilon_V=1\times10^{-14}$ and the final estimated residual norm was 4.4523 with 487 nodes. The same problem was solved on the static grid as shown in Figure~\ref{initial_grid}. In the static grid case, the solver met the iteration stopping criterion after 2639 iterations with the same tolerance and the final estimated residual norm was 6.8150 with 1502 nodes. The solver with the integrated adaptive grid converged 258.2340 seconds earlier.

\section{Conclusion}
This paper has introduced a reinforcement learning-based adaptive grid refinement method for solving the HJB PDE. By balancing the reduction of node population with the reduction of the residual norm, our approach has demonstrated its ability to optimize computational resources while reducing the estimated residual norm. Notably, the algorithm's capability to dynamically add and remove nodes within a defined domain resulted in reductions in the estimated integrated residual norm across the two-dimensional test cases. We suggest that randomizing the initial node count during training could further improve the node placement and removal process while increasing robustness. 

Future work will extend the proposed adaptive grid-refinement method toward reliable transfer across state dimensions by developing dimension-agnostic observations based on normalized local spacing, occupancy, and summaries of neighborhood geometry and interpolation quality. One direction for future work is to explore the use of a common scrambled Sobol query set within each refinement episode, together with recomputing greedy controls at the evaluation points, to assess whether these choices improve the reliability of the residual-based reward as an indicator of refinement quality. These changes may reduce comparison noise and measure consistency with the minimized HJB equation. To strengthen the connection between the reward and approximation accuracy, we will investigate residual-to-error estimates accounting for domain coverage, boundary error, and discretization consistency. Refinement acceptance criteria will additionally control
residual growth, prevent grid degeneracy, and maintain
admissible footpoints and monotone interpolation. In
settings admitting a strict contraction estimate, these
criteria will also preserve the contraction property of
each fixed-grid Bellman operator. The analysis will also characterize when changes in grid geometry improve the reconstructed value function. Specifically, let $V^\star$ denote the exact value function on $\Omega$, and let 
$\widehat V_X$ and $\widehat V_Y$ denote its reconstructions on grids $X$ and $Y$. Defining \(e_X=V^\star-\widehat V_X\) and \(\delta=\widehat V_Y-\widehat V_X\), with \(e_X,\delta\in L^2(\Omega)\), the true $L^2(\Omega)$ approximation error decreases precisely when
\[
2\langle e_X,\delta\rangle_{L^2(\Omega)}
>
\|\delta\|_{L^2(\Omega)}^2.
\]
This condition requires sufficient alignment between the interpolation correction and the existing error without excessive overshoot. Since the exact error is generally unavailable during training, this characterization will guide the theoretical analysis, while computable residual estimates and stability safeguards will guide refinement decisions.

The methodology presented in this paper is not limited to HJB equations and can be extended to other types of PDEs. A key highlight of our work is the demonstration of zero-shot generalization in grid refinement for problems in higher dimensions. The results from higher-dimensional demonstrations mark a promising step toward showcasing limited scalability of the algorithm.

\newpage
\section{Acknowledgment}
The authors acknowledge the support of the U.S. Air Force Research Laboratory (AFRL) under grant number FA8651-23-1-0001. Any opinions and findings in this paper are of the authors and do not reflect those of the AFRL. This work used the Accelerating Computing for Emerging Sciences (ACES) at Texas A\&M University through allocation MTH240017 from the Advanced Cyberinfrastructure Coordination Ecosystem: Services \& Support (ACCESS) program, which is supported by U.S. National Science Foundation grant numbers 2138259, 2138286, 2138307, 2137603, and 2138296\cite{boerner2023}.

\appendix
\section*{Appendix: Theorem 1}
We have\begin{equation*}
\|\,V_{h}-V_{h}^{k}\|_{\infty}\;\le\;
\frac{C}{\lambda\,h}
\bigl\|V_{h}-\mathcal{I}_{k}[V_{h}]\bigr\|_{\infty}.
\end{equation*}

\begin{proof}
For each node $x_{i}\in\mathcal{X}$, we define the nodal difference
\begin{equation*}
D_{i}\equiv V_{h}(x_{i}) - V_{h}^{k}(x_{i}),
\end{equation*}
and for each $x\in\Omega$, the interpolation error
\begin{equation*}
E_x\equiv
V_{h}(x) - \mathcal{I}_{k}[V_{h}](x).
\end{equation*}
Previously we have reviewed that for $\lambda>L_{f}$, Equations \eqref{hjb_h} and \eqref{hjb_h_x} have unique solutions $V_h(x)$, $x\in\Omega$ and $V^k_h(x_{i})$, $x_i\in\mathcal{X}$ respectively. If $x_i+hf(x_i,a)\in\Omega$ is not located at a node, we interpolate $V_{h}^{k}$ from its values $\{V_{h}^{k}(x_{j})\}$ using (convex combination) interpolation weights.

Let $u_i^*\in U$ achieve the minimum in Eq. \eqref{hjb_h_x}. Then we have that
\begin{equation*}
V_{h}^{k}(x_{i})=(1-\lambda\,h)\sum_{j=1}^{n}\lambda_{ij}(u^*_{i})V_{h}^{k}(x_{j})+h\,r(x_{i},u^*_{i}).
\end{equation*}
But from \eqref{hjb_h}, the same control yields,
\begin{equation*}
V_{h}(x_{i})\le(1-\lambda\,h)\;V_{h}\bigl(x_{i}+h\,f(x_{i},u^*_{i})\bigr)+h\,r(x_{i},u^*_{i}).
\end{equation*}
Subtracting, we get
\begin{equation*}
D_{i}\le(1-\lambda\,h)\Bigl[V_{h}(x_{i}+h\,f(x_{i},u^*_{i}))-\sum_{j}\lambda_{ij}(u^*_{i})\,V_{h}^{k}(x_{j})
\Bigr].
\end{equation*}
Now
\begin{equation*}
V_{h}(x_{i}+h\,f(x_{i},u^*_{i}))=\sum_{j}\lambda_{ij}(u^*_{i})V_{h}(x_{j})+[V_{h}(\xi)-\mathcal{I}_{k}[V_{h}](\xi)],\quad \xi = x_{i}+h\,f(x_{i},u^*_{i}).
\end{equation*}
Hence
\begin{equation*}
D_{i}
\le(1-\lambda\,h)\sum_{j}\lambda_{ij}(u^*_{i})\bigl[V_{h}(x_{j})-V_{h}^{k}(x_{j})\bigr]+(1-\lambda\,h)\;E_{\xi}.
\end{equation*}
This shows
\begin{equation*}
D_{i}\le(1-\lambda\,h)\,\max_{j}\bigl|D_{j}\bigr|+(1-\lambda h)\|E\|_{\infty}.
\end{equation*}
Similarly, let $u_i^\circ\in U$ achieve the minimum in Eq. \eqref{hjb_h} at $x_i$.  Then
\begin{equation*}
V_{h}(x_{i})=(1-\lambda h) V_{h}(x_{i}+h f(x_{i},u^{\circ}_{i}))+h\,r(x_{i},u^{\circ}_{i}),
\end{equation*}
while
\begin{equation*}
V_{h}^{k}(x_{i})\le(1-\lambda\,h)\sum_{j}\lambda_{ij}(u^{\circ}_{i})\,V_{h}^{k}(x_{j})+h\,r(x_{i},u^{\circ}_{i}).
\end{equation*}
Setting $\xi=x_i+h f(x_i,u_i^\circ)$ and subtracting the two, we have
\begin{equation*}
D_{i}=V_{h}(x_{i})-V_{h}^{k}(x_{i})\ge(1-\lambda\,h)\Bigl[\sum_{j}\lambda_{ij}(u^{\circ}_{i})\,V_{h}(x_{j})-\sum_{j}\lambda_{ij}(u^{\circ}_{i})\,V_{h}^{k}(x_{j})\Bigr]+(1-\lambda\,h)\,E_{\xi}.
\end{equation*}
Thus
\begin{equation*}
D_{i}\ge-(1-\lambda\,h)\,\max_{j}\bigl|D_{j}\bigr|-(1-\lambda\,h)\,\|E\|_{\infty}.
\end{equation*}

Combining the upper and lower bounds shows
\begin{equation*}
|D_{i}|
\le(1-\lambda\,h)\,\|D\|_{\infty}+(1-\lambda\,h)\,\|E\|_{\infty}\quad\Longrightarrow\quad\|D\|_{\infty}\le(1-\lambda\,h)\,\|D\|_{\infty}+(1-\lambda\,h)\,\|E\|_{\infty}.
\end{equation*}
Therefore,
\begin{equation*}
\|D\|_{\infty}=\max_{i}\bigl|V_{h}(x_{i})-V_{h}^{k}(x_{i})\bigr|\le\frac{1-\lambda\,h}{\lambda\,h}\,\|E\|_{\infty}.
\end{equation*}

Next, let  $x\in\Omega$ be an interior point and $\{\lambda_{j}(x)\}$ be its interpolation coordinates. Then
\begin{equation*}
V_{h}^{k}(x)=\sum_{j}\lambda_{j}(x)\;V_{h}^{k}(x_{j}),
\end{equation*}
so
\begin{equation*}
V_{h}(x) - V_{h}^{k}(x)=\Bigl[V_{h}(x)-\sum_{j}\lambda_{j}(x)\,V_{h}(x_{j})\Bigr]+\sum_{j}\lambda_{j}(x)\,\bigl[V_{h}(x_{j})-V_{h}^{k}(x_{j})\bigr].
\end{equation*}
Taking the supremum in $x$,
\begin{equation*}
\|V_{h}-V_{h}^{k}\|_{\infty}\le\|E\|_{\infty} + \|D\|_{\infty}\le\|E\|_{\infty}+\frac{1-\lambda\,h}{\lambda\,h}\;\|E\|_{\infty}\le\frac{1}{\lambda\,h}\;\|E\|_{\infty},
\end{equation*}
whenever $0<\lambda h\le1$. 
\end{proof}

\bibliographystyle{AAS_publication} 
\bibliography{references} 

\end{document}